\newtheorem{thm}{Theorem}
\newtheorem{lmm}{Lemma}
\newtheorem{cor}{Corollary}
\newtheorem{defn}{Definition}
\theoremstyle{definition}
\newcommand{\md}{\mathcal{D}}
\newcommand{\cp}{\mathcal{P}}
\newcommand{\cq}{\mathcal{Q}}
\newcommand{\rr}{\mathbb{R}}
\newcommand{\tr}{\operatorname{Tr}}
\newcommand{\ve}{\varepsilon}
\newcommand{\rank}{\operatorname{rank}}
\renewcommand{\bar}{\overline}
\renewcommand{\tilde}{\widetilde}
\renewcommand{\hat}{\widehat}
\begin{document}

\title{A deterministic theory of low rank matrix completion}
\author{Sourav Chatterjee}
\address{Department of Statistics, Stanford University, Sequoia Hall, 390 Serra Mall, Stanford, CA 94305}
\email{souravc@stanford.edu}
\thanks{Research partially supported by NSF grant DMS-1855484}
\keywords{Matrix completion, low rank matrix, graph limit, graphon}
\subjclass[2010]{15A83, 65F30}

\begin{abstract}
The problem of completing a large low rank matrix using a subset of revealed entries has received much attention in the last ten years. The main result of this paper gives a necessary and sufficient condition, stated  in the language of graph limit theory, for a sequence of matrix completion problems with arbitrary missing patterns to be asymptotically solvable. It is then shown that a small modification of the Cand\`es--Recht nuclear norm minimization algorithm provides the required asymptotic solution whenever the sequence of problems is asymptotically solvable. The theory is fully deterministic, with no assumption of randomness. A number of open questions are listed. 
\end{abstract}

\maketitle

\section{Introduction}
The problem of reconstructing a large low rank matrix from a subset of revealed entries has attracted widespread attention in the statistics and machine learning literatures in the last ten years. For a recent survey of this vast body of work, see~\cite{nks19}. Notice that the problem itself is a problem in linear algebra, with nothing  random in it. However, matrix completion in classical linear algebra is restricted to matrices with special structure, such as positive definite matrices~\cite{johnson90}. 

In the literature on low rank matrix completion, randomness enters into the picture through the assumption that the set of missing entries is random. In most papers, the randomness is uniform over all subsets of a given size. This assumption, while unrealistic, allows researchers to prove many beautiful theorems. There are a handful of papers that strive to work with deterministic missing patterns or missing patterns that depend on the matrix, using spectral gap conditions~\cite{hss14, bj14},  rigidity theory~\cite{sc10}, algebraic geometry~\cite{ktt15} and other methods~\cite{cbsw, pbn16, ls13, sbj18}. These are discussed in some detail in Section \ref{litsec}.

However, a complete characterization of missing patterns that allow approximate completion of large low rank matrices has remained an open question. The aim of this paper is to give such a characterization. Here `approximate completion' means that the missing entries are required to be recovered approximately and not exactly (a precise definition is given later). The analogous criterion for exact recovery is left as an open question. Our result is an asymptotic statement involving limits; proving a non-asymptotic version of the result is also left as an open question.

Right away, it is important to note that not all patterns of revealed entries allow low rank matrix completion (even in an approximate sense), even if a substantial fraction of entries are revealed. For example, if we have a large square matrix of order $n$, and only the top $n/2$ rows are revealed, the matrix cannot be completed even if it is known to have rank $1$ (see Figure \ref{mat1}). When we say `cannot be completed', what we really mean is that there multiple very different ways to complete, even under the low rank assumption. This means that any particular completion cannot be a reliable estimate of the true matrix. 

\begin{figure}[t!]
\begin{center}
\begin{tikzpicture}[scale = 1]
\draw[thick] (0,0) to (0,4);
\draw[thick] (4,0) to (4,4);
\draw[thick] (0,0.01) to (0.2,0.01);
\draw[thick] (0,3.99) to (0.2,3.99);
\draw[thick] (4,0.01) to (3.8,0.01);
\draw[thick] (4,3.99) to (3.8,3.99);
\draw[lightgray, fill] (.1,2) rectangle (3.9,3.9);
\node at (2,1) {Missing};
\node at (2,3) {Available};
\end{tikzpicture}
\caption{A pattern of missing entries that cannot be completed (even approximately) even if the rank is known to be small. \label{mat1}}
\end{center}
\end{figure}
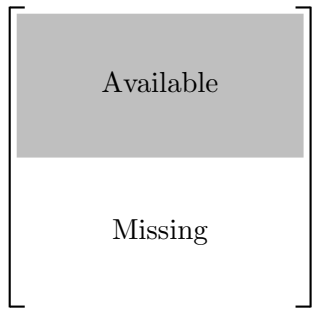

This example suggests that the set of revealed entries has to be in some sense `dense' in the set of all entries for the matrix to be recoverable. However, one has to be cautious about this intuition. Consider a second counterexample: Let $n$ be even, and consider an $n\times n$ matrix whose $(i,j)^{\textup{th}}$ entry is revealed if and only if $i$ and $j$ have the same parity (that is, both even or both odd). This set of revealed entries looks sufficiently `dense' (see Figure~\ref{mat2}). Yet, we will now argue that recovery is not possible even if the rank of the matrix is as small as three.  

To see this, note that the rows and columns can be relabeled such that the even numbered rows and columns in the original matrix are renumbered from $1$ to $n/2$ and the odd numbered rows and columns are renumbered from $n/2+1$ to $n$. Then in this new arrangement of rows and columns, the $(i,j)^{\textup{th}}$ entry is revealed if and only if either both $i$ and $j$ are between $1$ and $n/2$, or both $i$ and $j$ are between $n/2+1$ and $n$. In other words, the matrix is a $2\times 2$ block matrix with blocks of order $n/2\times n/2$, where only the top-left and bottom-right blocks are revealed (again, see Figure \ref{mat2}). Clearly, the other two blocks cannot be recovered using this information if the rank is three  or higher.

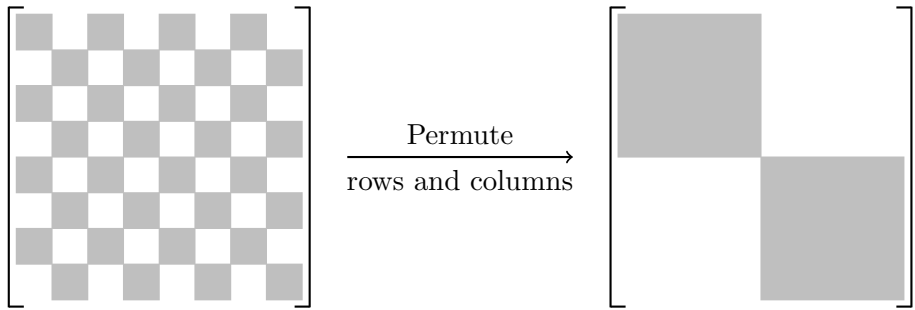
\begin{figure}[t!]
\begin{center}
\begin{tikzpicture}[scale = 1]
\draw[thick] (0,0) to (0,4);
\draw[thick] (4,0) to (4,4);
\draw[thick] (0,0.01) to (0.2,0.01);
\draw[thick] (0,3.99) to (0.2,3.99);
\draw[thick] (4,0.01) to (3.8,0.01);
\draw[thick] (4,3.99) to (3.8,3.99);
\draw[thick] (8,0) to (8,4);
\draw[thick] (12,0) to (12,4);
\draw[thick] (8,0.01) to (8.2,0.01);
\draw[thick] (8,3.99) to (8.2,3.99);
\draw[thick] (12,0.01) to (11.8,0.01);
\draw[thick] (12,3.99) to (11.8,3.99);
\foreach \i in {0,...,3} {
  \foreach \j in {0,...,3} { 
     \draw[lightgray, fill] ({.1 + .475*(2*\i+1)}, {.1 + .475*2*\j}) rectangle ({.1 + .475*(2*\i+2)}, {.1 + .475*(2*\j+1)});
  }
}
\foreach \i in {0,...,3} {
  \foreach \j in {0,...,3} { 
     \draw[lightgray, fill] ({.1 + .475*(2*\i)}, {.1 + .475*(2*\j+1)}) rectangle ({.1 + .475*(2*\i+1)}, {.1 + .475*(2*\j+2)});
  }
}
\draw[lightgray, fill] (8.1,2) rectangle (10,3.9);
\draw[lightgray, fill] (10,.1) rectangle (11.9,2);
\draw[thick, ->] (4.5,2) to (7.5,2);
\node at (6,2.3) {Permute};
\node at (6,1.7) {rows and columns};
\end{tikzpicture}
\caption{A pattern of missing entries that cannot be completed. Shaded regions denote available entries and white regions are missing. Although the pattern of available entries on the left looks `dense', permuting rows and columns in a particular way gives the pattern on the right, which is clearly not completable (even approximately) even if the rank is known to be small. \label{mat2}}
\end{center}
\end{figure}

The problem with the above counterexample is that the rows and columns could  be relabeled so that the pattern of revealed entries is no longer `dense'. This suggests that for recoverability of low rank matrices, it is necessary that the pattern of revealed entries {\it remains `dense' under any  relabeling of rows and columns.}   

It turns out that this condition is also sufficient. This is the main theorem of this paper (Theorem \ref{mainthm}). The precise statement is given in the language of graph limit theory~\cite{lovaszbook}.  It is then proved that a modification of a popular method of low rank matrix completion by nuclear norm minimization~\cite{candesrecht09, candestao10, candesplan10} succeeds in approximately recovering the full matrix whenever the above condition holds (Theorem \ref{convthm}). In other words, this algorithm does the job whenever the job is doable. 

The modification is as follows. The usual Cand\`es--Recht algorithm finds the matrix with minimum nuclear norm among all matrices that agree with the unknown matrix on the set of revealed entries. In the modification, we assume that an upper bound $L$ on the magnitudes of the entries of the unknown matrix is known to the user (which is usually true), and then find the matrix that minimizes the nuclear norm subject to the usual constraint, plus the constraint that the magnitudes of all entries are bounded by $L$. Note that this is still a convex optimization problem, just like the original algorithm. 

The rest of the paper is organized as follows. Some necessary notations are introduced in Section \ref{notsec}. Several definitions needed for stating our results in the language of graph limit theory are given in Section \ref{defsec}. The  main results are presented in Section \ref{resultsec}. A brief discussion of the existing literature on matrix completion with non-uniform missing patterns is given in Section \ref{litsec}. Some open problems are stated in Section \ref{opensec}. The remaining sections are devoted to proofs.

\section{Notations}\label{notsec}
All our matrices will have real entries. We will denote the $(i,j)^{\textup{th}}$ entry of a matrix $A$ by $a_{ij}$, of $B$ by $b_{ij}$, and so on. The transpose of a matrix $A$ will be denoted by $A^T$, and the trace by $\tr(A)$, and the rank by $\rank(A)$. Vectors will be treated as matrices with one column.

Let $A$ be an $m\times n$ matrix. We will have occasions to use the following matrix norms. The {\it Frobenius norm} of $A$ is defined as
\[
\|A\|_F := \biggl(\sum_{i=1}^m \sum_{j=1}^n a_{ij}^2\biggr)^{1/2}.
\]
More frequently, we will use the following {\it averaged} version of the Frobenius norm:
\[
\|A\|_{\bar{F}} := \frac{\|A\|_F}{\sqrt{mn}} = \biggl(\frac{1}{mn}\sum_{i=1}^m \sum_{j=1}^n a_{ij}^2\biggr)^{1/2}.
\]
For us, the average Frobenius norm will be more useful than the usual Frobenius norm because it is a measure of the size of a typical entry of $A$. 

If $\sigma_1,\ldots, \sigma_r$ are the non-zero singular values of $A$, the {\it nuclear norm} of $A$ is defined as
\[
\|A\|_* := \sum_{i=1}^r \sigma_i. 
\]
The {\it $\ell^\infty$ norm} of $A$ is simply
\[
\|A\|_\infty:= \max_{i,j} |a_{ij}|. 
\]
We will also use a somewhat non-standard matrix norm, called the {\it cut norm}, defined as
\[
\|A\|_\Box := \frac{1}{mn}\max\{|x^T A y| :x\in \rr^m,\, y\in \rr^n,\, \|x\|_\infty \le 1, \, \|y\|_\infty\le 1\}. 
\]
In the usual definition of the cut norm for matrices, the maximum is not divided by $mn$. We divide by $mn$ because it will be more convenient for us to work with this version, and also because this is the custom in graph limit theory. 

For each $k$, let $S_k$ be the group of all permutations of $\{1,\ldots, k\}$. For $\pi\in S_m$ and $\tau\in S_n$, let $A^{\pi,\tau}$ be the matrix whose $(i,j)^{\textup{th}}$ entry is $a_{\pi(i)\tau(j)}$. The cut norm is used to define the {\it cut distance} between two $m\times n$ matrices $A$ and $B$ as 
\begin{align}\label{cutdef}
\delta_\Box(A,B) := \min_{\pi\in S_m, \, \tau\in S_n} \|A^{\pi,\tau} - B\|_\Box. 
\end{align}
We will say that a matrix is a {\it binary matrix} if each of its entries is either $0$ or $1$. We will use binary matrices to denote the locations of revealed entries in matrix completion problems.  

If $A$ and $B$ are two $m\times n$ matrices, the {\it Hadamard product} of $A$ and $B$, denoted by $A\circ B$, is the $m\times n$ matrix whose $(i,j)^{\textup{th}}$ entry is $a_{ij}b_{ij}$. Hadamard products will be useful for us in the following way. If $A$ is a matrix which is partially revealed, and $P$ is a binary matrix indicating the locations of the revealed entries, then $A\circ P$ is the matrix whose entries equal the entries of $A$ wherever they are revealed, and zero elsewhere.

\section{Definitions}\label{defsec}
As mentioned earlier, certain patterns of revealed entries may not suffice for approximately recovering the full matrix, whereas other patterns may suffice. While this makes intuitive sense, we need to give a precise mathematical definition of the notion of recoverability before proceeding further with this. Roughly speaking, approximate recoverability should mean that if two low rank matrices are approximately equal on the revealed entries, they should also be approximately equal everywhere. To make this fully precise, we need to state it in terms of sequences of matrices rather than a single matrix.
\begin{defn}\label{recovdef}
Let $\{P_k\}_{k\ge 1}$ be a sequence of binary matrices. We will say that this sequence \emph{admits stable recovery of low rank matrices} if it has the following property. Take any two sequences of matrices $\{A_k\}_{k\ge 1}$ and $\{B_k\}_{k\ge 1}$, where  $A_k$ and $B_k$ have the same dimensions as $P_k$. Suppose that there are numbers $K$ and $L$ such that $\rank(A_k)$ and $\rank(B_k)$ are bounded by $K$ and $\|A_k\|_\infty$ and $\|B_k\|_\infty$ are bounded by $L$ for each $k$. Then for any $\ve>0$ there is some $\delta>0$, depending only on $\ve$, $K$ and $L$, such that if
$\limsup_{k\to\infty} \|(A_k - B_k)\circ P_k\|_{\bar{F}} \le \delta$, 
then $\limsup_{k\to\infty} \|A_k-B_k\|_{\bar{F}} \le \ve$. 
\end{defn}
The word `stable' is added in the above definition to emphasize that we only need approximate equality of the revealed entries, rather than exact equality.

To understand the essence of the above definition, it is probably helpful to revisit the counterexample mentioned earlier. For each $k$, let $P_k$ be the $k\times k$ binary matrix whose entries are $1$ in the first $[k/2]$ rows, and $0$ elsewhere. Let $A_k$ be the $k\times k$ matrix of all zeros, and $B_k$ be the $k\times k$ matrix whose entries are $0$ in the top $[k/2]$ rows and $1$ elsewhere. Then $\|A_k\|_\infty$ and $\|B_k\|_\infty$ are bounded by $1$ for all $k$, and $\rank(A_k)$ and $\rank(B_k)$ are bounded by $1$ for all $k$. Now clearly $\lim_{k\to\infty} \|(A_k - B_k)\circ P_k\|_{\bar{F}} = 0$, but a simple calculation shows that
\[
\lim_{k\to\infty} \|A_k-B_k\|_{\bar{F}} = \frac{1}{\sqrt{2}}.
\]
Thus, the sequence $\{P_k\}_{k\ge 1}$ does not admit stable recovery of low rank matrices.

To verify that a sequence $\{P_k\}_{k\ge 1}$ admits stable recovery of low rank matrices according to Definition \ref{recovdef}, one needs to verify the stated condition for all sequences $\{A_k\}_{k\ge 1}$ and $\{B_k\}_{k\ge 1}$. It would however be much more desirable to have an equivalent criterion in terms of some intrinsic property of the sequence $\{P_k\}_{k\ge 1}$. The main result of this paper gives such a criterion. To state this result, we need to introduce some further definitions. 

In graph limit theory~\cite{lovaszbook}, a {\it graphon} is a Borel measurable function from $[0,1]^2$ into $[0,1]$, which is symmetric in its arguments. Since we are dealing with matrices that need not be symmetric, we need to generalize this definition by dropping the symmetry condition.
\begin{defn}
An \emph{asymmetric graphon} is a Borel measurable function from $[0,1]^2$ into $[0,1]$.
\end{defn}
If $W$ is an asymmetric graphon and $m$ and $n$ are two positive integers, we define the $m\times n$ discrete approximation of $W$ to be the $m\times n$ matrix $W_{m,n}$, whose $(i,j)^{\textup{th}}$ entry is the average value of $W$ in the rectangle $[\frac{i-1}{m}, \frac{i}{m}]\times [\frac{j-1}{n}, \frac{j}{n}]$, that is, 
\[
mn\int_{(i-1)/m}^{i/m} \int_{(j-1)/n}^{j/n} W(x,y)dydx. 
\]
If $A$ is an $m\times n$ matrix and $W$ is an asymmetric graphon, we define the cut distance between $A$ and $W$ to be
\[
\delta_\Box(A, W) := \delta_\Box(A, W_{m,n}),
\]
where the right side is as defined in equation \eqref{cutdef}. 
\begin{defn}\label{limdef}
We will say that a sequence of matrices $\{A_k\}_{k\ge1}$ converges to an asymmetric graphon $W$ if $\delta_\Box(A_k, W)\to0$ as $k\to \infty$. 
\end{defn}
Note that the limit defined in the above sense may not be unique. The same sequence may converge to many different limits. In graph limit theory, all of these different limits are considered to be equivalent by defining an equivalence relation on the space of graphons. It is possible to do a similar thing for asymmetric graphons, but that is not needed for this paper. 
 
We will use asymmetric graphons to represent limits of binary matrices. Not every sequence has a limit, but subsequential limits always exist.
\begin{thm}\label{compactthm}
Any sequence of binary matrices with dimensions tending to infinity has a subsequence that converges to an asymmetric graphon.
\end{thm}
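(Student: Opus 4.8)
The plan is to carry over the Lov\'asz--Szegedy compactness theorem for graphons~\cite{lovaszbook} to the asymmetric setting. Passing to a subsequence, assume the dimensions of $P_k$ are $m_k\times n_k$ with $m_k\to\infty$ and $n_k\to\infty$. To every $m\times n$ matrix $A$ with entries in $[0,1]$ associate the step-function asymmetric graphon $\tilde A$ that equals $a_{ij}$ on $[\tfrac{i-1}{m},\tfrac{i}{m})\times[\tfrac{j-1}{n},\tfrac{j}{n})$, and extend $\delta_\Box$ to a pair of asymmetric graphons in the standard way, as the infimum of the cut norm of the difference over all measure-preserving rearrangements of $[0,1]$ on each side. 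Since the discretization $W_{m,n}$ of a fixed graphon $W$ converges to $W$ in cut norm as $\min(m,n)\to\infty$, a routine comparison reduces the theorem to the following: the sequence $\{\tilde P_k\}$ has a subsequence that converges in $\delta_\Box$ to some asymmetric graphon.

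The engine is the Frieze--Kannan weak regularity lemma in its evident asymmetric form: for every $\ve>0$ there is a $q(\ve)<\infty$ such that every $m\times n$ matrix $A$ with $\|A\|_\infty\le 1$ admits a partition of its rows into at most $q(\ve)$ classes and of its columns into at most $q(\ve)$ classes for which the block-averaged matrix $\hat A$ satisfies $\|A-\hat A\|_\Box\le\ve$. This is the usual energy-increment argument — whenever $\|A-\hat A\|_\Box>\ve$ one can refine so as to raise $\tfrac1{mn}\|\hat A\|_F^2\in[0,1]$ by more than $\ve^2$, bounding the number of refinements — and asymmetry plays no role in it. Fixing $\ve_1>\ve_2>\cdots\to0$ and never resetting the partition, one obtains for each matrix a single sequence of pairs of partitions, nested in the refinement order, such that the $r$-th pair works at level $\ve_r$ for every $r$.

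Apply this to each $P_k$. Since the partitions are nested, a single relabeling of the rows and a single relabeling of the columns turn all of them into interval partitions, and rounding the interval endpoints to a fine enough common grid $\{0,\tfrac1{N_r},\dots,1\}$ — at a cost of $O(q(\ve_r)/N_r)$ in cut norm, which we make $\le\ve_r$ by taking $N_r$ large, with the $N_r$ increasing and $N_r\mid N_{r+1}$ — produces for each $r$ a matrix $Q_k^{(r)}\in[0,1]^{N_r\times N_r}$ with $\delta_\Box(\tilde P_k,(Q_k^{(r)})^{\uparrow})\le 3\ve_r$, where $(\cdot)^{\uparrow}$ denotes the blow-up of a matrix to the step-function graphon on the regular $N_r\times N_r$ grid and we used that $\delta_\Box$ is invariant under rearranging rows and columns. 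For each fixed $r$ the $Q_k^{(r)}$ lie in the compact cube $[0,1]^{N_r\times N_r}$, so a diagonal argument yields one subsequence $(k_j)$ along which $Q_{k_j}^{(r)}\to Q_\infty^{(r)}$ entrywise for every $r$; set $W^{(r)}:=(Q_\infty^{(r)})^{\uparrow}$. Then $\limsup_j\delta_\Box(\tilde P_{k_j},W^{(r)})\le 3\ve_r$, so the triangle inequality gives $\delta_\Box(W^{(r)},W^{(s)})\le 3\ve_r+3\ve_s$ and $(W^{(r)})_r$ is $\delta_\Box$-Cauchy; by completeness of the space of asymmetric graphons under $\delta_\Box$ — the asymmetric analogue of the corresponding fact in~\cite{lovaszbook}, with the same (martingale) proof — it has a limit $W$, and then $\limsup_j\delta_\Box(\tilde P_{k_j},W)\le \delta_\Box(W,W^{(r)})+3\ve_r\to0$ as $r\to\infty$. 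Combined with the reduction of the first paragraph this gives $\delta_\Box(P_{k_j},W)\to0$.

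I expect the bulk of the work to be in the third paragraph — turning a bounded-complexity block approximation into an honest approximation on one fixed regular grid after a single rearrangement of rows and columns, keeping the grids nested across levels, and supplying enough completeness of the asymmetric graphon space to pass from the Cauchy sequence $W^{(r)}$ to its limit; the weak regularity lemma and the reductions around it are routine. An alternative that outsources most of this is to symmetrize: replace $P_k$ by the bipartite $0/1$ matrix $\left(\begin{smallmatrix}0 & P_k\\ P_k^{T} & 0\end{smallmatrix}\right)$, viewed as a node-weighted graph with total weight $\tfrac12$ on each of the two sides regardless of $m_k$ and $n_k$, apply the (node-weighted) Lov\'asz--Szegedy compactness theorem, and read the asymmetric limit off the off-diagonal block after rescaling the two coordinate axes.
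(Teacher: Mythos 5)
Your architecture is the same as the paper's: an asymmetric weak regularity lemma producing nested row/column partitions, a permutation making the classes into intervals, a diagonal extraction over the finitely many block parameters at each level, and a martingale-type argument to produce the limit graphon from the resulting sequence of step-function approximations. (The paper obtains its regularity-type lemma by truncating the SVD and discretizing singular vectors rather than by energy increment, but that difference is immaterial here.) The one place where your write-up is not actually a proof is the final step: you invoke ``completeness of the space of asymmetric graphons under $\delta_\Box$'' to get a limit for the Cauchy sequence $W^{(r)}$. That completeness statement is of essentially the same depth as the compactness theorem you are proving --- in \cite{lovaszbook} it is deduced \emph{from} compactness --- so citing it with the remark ``same martingale proof'' risks circularity unless you actually run the martingale argument for your specific sequence. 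And here your construction has created a genuine obstacle to doing so: a martingale argument needs each $W^{(r)}$ to be the conditional expectation of $W^{(r+1)}$ with respect to a nested grid, but your rounding of the interval endpoints to the lattice $\{0,\tfrac{1}{N_r},\dots,1\}$ perturbs the block averages, so the exact averaging relation between consecutive levels is lost; $\delta_\Box$-Cauchyness alone (which is all you retain) does not give pointwise or $L^1$ convergence of the $W^{(r)}$ without some realignment argument you have not supplied.

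The paper avoids this by never rounding: it keeps $A_{k,j}$ as the \emph{exact} block average of $A_k$ over the $j$-th partition, so refinement gives $A_{k,j}=\bigl(A_{k,j+1}\bigr)^{\cp_{k,j},\cq_{k,j}}$ exactly; it then lets the (irregular) block boundaries and block values converge as $k\to\infty$ for each fixed $j$, and the exact averaging identity passes to the limits $W_j$, which therefore form an honest martingale and converge a.e.\ and in $L^1$, hence in cut norm. Your proof is repairable along the same lines --- either drop the rounding and let the interval endpoints converge as part of the diagonalization, or keep the rounding but verify that $W^{(r)}$ differs from the conditional expectation of $W^{(r+1)}$ by $O(\ve_r)$ in $L^1$ and run an almost-martingale argument --- but as written the existence of $W$ has not been established. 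Your alternative suggestion of symmetrizing via the bipartite block matrix $\bigl(\begin{smallmatrix}0&P_k\\ P_k^T&0\end{smallmatrix}\bigr)$ with rescaled sides and quoting the node-weighted compactness theorem is a legitimate way to outsource all of this, and is genuinely different from (and shorter than) the paper's self-contained route.
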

The above theorem is the asymmetric analog of a fundamental compactness theorem in graph limit theory~\cite[Theorem 9.23]{lovaszbook}. It is probable that the asymmetric version already exists in the literature, but since the proof is not difficult, it is presented in Section \ref{compactproof}.

\section{Results}\label{resultsec}
Our main objective is to give a necessary and sufficient condition for a sequence of binary matrices to admit stable recovery of low rank matrices. Because of Theorem \ref{compactthm}, it suffices to only consider convergent sequences. 
\begin{thm}\label{mainthm}
A sequence of binary matrices with dimensions tending to infinity and converging to an asymmetric graphon $W$ admits stable recovery of low rank matrices (in the sense of Definition \ref{recovdef}) if and only if  $W$ is nonzero almost everywhere. 
\end{thm}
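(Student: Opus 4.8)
\medskip
\noindent\textbf{Proof proposal for Theorem~\ref{mainthm}.}
The statement splits into two implications of quite different flavour. The ``only if'' direction (if $W$ vanishes on a set of positive measure then stable recovery fails) is a construction of counterexamples in the spirit of the pictures in the Introduction. The ``if'' direction (if $W>0$ almost everywhere then stable recovery holds) is where the real work lies, and the plan is to argue it by contradiction, using the compactness theory for asymmetric graphons from Theorem~\ref{compactthm}.

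\emph{The ``if'' direction.} Assume $\{P_k\}$ converges to $W$ with $W>0$ a.e., and suppose towards a contradiction that stable recovery fails. Unwinding Definition~\ref{recovdef}, there are $K,L$ and sequences $\{A_k\},\{B_k\}$ with $\rank(A_k),\rank(B_k)\le K$ and $\|A_k\|_\infty,\|B_k\|_\infty\le L$, such that along a subsequence $\|(A_k-B_k)\circ P_k\|_{\bar F}\to 0$ while $\|A_k-B_k\|_{\bar F}\ge\ve$ for a fixed $\ve>0$. Put $D_k:=A_k-B_k$, so $\rank(D_k)\le 2K$, $\|D_k\|_\infty\le 2L$, and after a further subsequence $\|D_k\|_{\bar F}\to c\in[\ve,2L]$; normalising, $E_k:=D_k/\|D_k\|_{\bar F}$ has $\rank(E_k)\le 2K$, $\|E_k\|_\infty\le 2L/\ve$, $\|E_k\|_{\bar F}=1$, and $\|E_k\circ P_k\|_{\bar F}\to 0$. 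Now I would run a \emph{joint} compactness argument: applying the method of Theorem~\ref{compactthm} to the pair $(E_k,P_k)$ — for instance to the single $m_k\times 2n_k$ array obtained by placing $E_k$ and a rescaled copy of $P_k$ side by side — pass to a subsequence and choose row/column relabellings $\sigma_k,\tau_k$ (the \emph{same} ones for both arrays) so that $E_k^{\sigma_k,\tau_k}$ and $P_k^{\sigma_k,\tau_k}$ converge in cut distance to a bounded kernel $E$ and to a graphon $W'$ respectively, where $W'$ is just a relabelling of $W$, hence still positive a.e.\ (the measure of $\{W'=0\}$ is a $\delta_\Box$-invariant). The decisive point is that the rank bound upgrades this weak convergence: a cut-convergent sequence of uniformly bounded matrices of rank $\le 2K$ has a subsequence converging in $L^2$, with a limit that is again of rank $\le 2K$. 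Granting this, $\|E\|_{L^2}=\lim\|E_k\|_{\bar F}=1$; and since $E_k^{\,2}$ is then uniformly bounded and of bounded rank, $\|E_k\circ P_k\|_{\bar F}^2=\tfrac1{m_kn_k}\sum_{i,j}(E_k)_{ij}^2(P_k)_{ij}$ converges to $\int_{[0,1]^2}E(x,y)^2W'(x,y)\,dx\,dy$. Hence $\int E^2W'=0$; as $W'>0$ a.e., this forces $E=0$ a.e., contradicting $\|E\|_{L^2}=1$.

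\emph{The ``only if'' direction.} Let $S:=\{W=0\}$, with $|S|>0$. It suffices to produce a bounded finite-rank kernel $g\colon[0,1]^2\to\rr$ that vanishes a.e.\ outside $S$ and has $\|g\|_{L^2}>0$: taking $A_k$ to be the discretisation of $g$ and $B_k:=0$, one gets $\|A_k-B_k\|_{\bar F}\to\|g\|_{L^2}>0$ while $\|(A_k-B_k)\circ P_k\|_{\bar F}^2\to\int g^2W=\int_{S^c}g^2W=0$ (again using the joint convergence of $A_k$ and $P_k$ and that $g^2W$ lives where $W=0$), which contradicts Definition~\ref{recovdef}. When $S$ contains a measurable rectangle $U\times V$ with $|U|,|V|>0$, one may take $g(x,y)=\1_U(x)\1_V(y)$ (rank one, bounded by one), which reproduces exactly the counterexamples of Figures~\ref{mat1}--\ref{mat2}. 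In general $S$ need not contain such a rectangle, and then the plan is to approximate $\1_S$ in $L^2([0,1]^2)$ by bounded finite-rank kernels — e.g.\ conditional expectations of $\1_S$ onto finer and finer product dyadic partitions, or Ces\`aro/Fej\'er-type smoothings, all of which stay $[0,1]$-valued — taking the approximation accurate enough that $\int_{S^c}g^2$ is as small as required while $\|g\|_{L^2}^2$ stays close to $|S|$.

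\emph{Where the difficulty is.} The routine parts are the logical unwinding of Definition~\ref{recovdef} and the discretisation bookkeeping. The genuine obstacle is the ``low rank upgrades cut convergence to $L^2$ convergence'' lemma invoked in the ``if'' direction: if one writes $E_k=\sum_{l\le 2K}\phi_{k,l}(x)\psi_{k,l}(y)$ via the singular value decomposition, the factors $\phi_{k,l},\psi_{k,l}$ need not be uniformly bounded even though $E_k$ is, so one cannot just extract limits of the factors. I expect this to be handled by passing to the (bounded) Gram data of the singular vectors and proving that the ``spectral profile'' of a bounded finite-rank matrix is continuous in the cut distance, and this is the step I would spend most effort on. A secondary point needing care is making the side-by-side compactness precise so that $E_k$ and $P_k$ are relabelled by identical permutations — since the Hadamard products are computed with aligned indices, it is exactly this alignment that lets the limit identify $\int E^2W'$.
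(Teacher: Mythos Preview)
Your ``only if'' direction is essentially the paper's: approximate $\1_S$ by a step function on a dyadic grid (the paper takes a finite union $T$ of dyadic squares with $\lambda(S\Delta T)<\ve$) and discretize. One caution: your sketch first asserts $\int g^2W=0$ and then retreats to ``$\int_{S^c}g^2$ small''; only the latter is available, and it means that for each fixed approximation the bad sequences have $\limsup\|(A_k-B_k)\circ P_k\|_{\bar F}$ small but strictly positive, with a rank bound depending on the grid. The paper lives with exactly this.

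For the ``if'' direction your compactness-by-contradiction route is genuinely different from the paper's, which is direct and quantitative. The paper's engine is a \emph{simultaneous block approximation} (Lemma~\ref{mainlmm1}): given $m\times n$ matrices $X,Y$ with $\|\cdot\|_\infty\le1$ and $\|\cdot\|_*\le q\sqrt{mn}$, one can permute rows and columns so that both are $\ve$-close in $\|\cdot\|_{\bar F}$ to block matrices sharing the \emph{same} block structure, the number of blocks depending only on $(q,\ve)$. This is proved by truncating the SVDs and discretizing the singular vectors of both matrices at once. From this one derives Theorem~\ref{quantthm}, a cut-continuity estimate $\|(X-Y)\circ Q\|_{\bar F}\le\|(X-Y)\circ P\|_{\bar F}+o(1)$ as $\|P-Q\|_\Box\to0$, and finishes by taking $Q=W_{m_k,n_k}$ and using $W>0$ a.e. A bonus of this route is that everything is stated for nuclear-norm (not just rank) bounds, which is exactly what Theorem~\ref{convthm} needs.

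Your approach could work, but the lemma you correctly flag as ``the genuine obstacle'' --- bounded rank plus cut convergence implies $L^2$ convergence --- \emph{is} the main content; the way to prove it is precisely the block-approximation argument above, so you are deferring the hard step rather than circumventing it. There are also real technical gaps in the compactness packaging. First, the side-by-side $m_k\times 2n_k$ trick does not produce the \emph{same} column permutation on the $E_k$-half and the $P_k$-half, so the alignment needed for the Hadamard product is lost. Second, even granting $E_k\to E$ in $L^2$ and $P_k\to W'$ in cut norm with aligned relabellings, the passage $\int E_k^2\,P_k\to\int E^2\,W'$ requires testing $P_k-W'$ against $E^2$; cut convergence only tests against products $a(x)b(y)$ with \emph{bounded} $a,b$, and a bounded finite-rank kernel need not admit a bounded-factor decomposition, so this step is not automatic. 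Third, the claim that $|\{W'=0\}|$ is a $\delta_\Box$-invariant is standard for symmetric graphons but is not developed in the paper for the asymmetric case. None of these is fatal, but together they make your route substantially longer than the paper's direct quantitative argument --- and once you prove the block-approximation lemma to fill the main gap, you essentially have the paper's proof in hand.
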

To understand this result, first consider the familiar case of entries missing at random. Suppose that each entry is revealed with probability $p$, independently of each other. Then the corresponding sequence of binary matrices converges to the graphon that is identically equal to $p$ on $[0,1]^2$. If $p>0$, Theorem \ref{mainthm} tells us that this sequence of revelation patterns admits stable recovery of low rank matrices. On the other hand, consider our running counterexample, where only the top half of the rows are revealed. The corresponding sequence of binary matrices converges to the graphon that is $1$ in $[0,1/2]\times [0,1]$ and $0$ in $(1/2,1]\times [0,1]$. Therefore this sequence does not admit stable recovery of low rank matrices, as we observed before. 

At this point the reader may be slightly puzzled by the fact that Theorem~\ref{mainthm} implies that recovery is impossible if the set of  revealed entries is sparse (because then the limit graphon is identically zero), whereas there are many existing results about recoverability of low rank matrices from a sparse set of revealed entries. The reason is that we are not assuming randomness and at the same time demanding that the recovery is `stable'. Suppose that most entries are the same for two matrices, but the entries that differ are the only ones that are revealed. Then there is no way to tell that the matrices are mostly the same. Thus, stable recovery is impossible from a small set of revealed entries if there is no assumption of randomness.

Theorem~\ref{mainthm} succeeds in giving an intrinsic characterization of recoverability in terms of the locations of revealed entries. However, it does not tell us how to actually recover a matrix from a set of revealed entries when recovery is possible. Fortunately, it turns out that this is doable by a small modification of an algorithm that is already used in practice, namely, the Cand\`es--Recht algorithm for matrix completion by nuclear norm minimization~\cite{candesrecht09, candestao10, candesplan10}.  The Cand\`es--Recht estimator of  a partially revealed matrix $A$ is the matrix with minimum nuclear norm among all matrices that agree with $A$ at the revealed entries. The modified estimator is the following.
\begin{defn}\label{estdef}
Let $A$ be a matrix whose entries are partially revealed. Suppose that $\|A\|_\infty\le L$ for some known constant $L$. We define the \emph{modified Cand\`es--Recht estimator} of $A$ as the matrix that minimizes nuclear norm among all $B$ that agree with $A$ at the revealed entries and satisfy $\|B\|_\infty\le L$. 
\end{defn}
The assumption of a known upper bound on the $\ell^\infty$ norm is not unrealistic. Usually such upper bounds are known, for example in recommender systems. The modified estimator is the solution of a convex optimization problem, just like the original estimator, and should therefore be computable on a computer if the dimensions are not too large. The following theorem shows that this algorithm is able to approximately recover the full matrix whenever the pattern of revealed entries allows stable recovery.
\begin{thm}\label{convthm}
Let $\{P_k\}_{k\ge 1}$ be a sequence of binary matrices with dimensions tending to infinity that admits stable recovery of low rank matrices. Let $\{A_k\}_{k\ge1}$ be a sequence of matrices such that for each $k$, $A_k$ has the same dimensions as $P_k$. Suppose that $\rank(A_k)$ and $\|A_k\|_\infty$ are uniformly bounded over $k$. Let $\hat{A}_k$ be the modified Cand\`es--Recht estimate of $A_k$ (as defined in Definition \ref{estdef}) when the locations of the revealed entries are defined by $P_k$. Then  $\lim_{k\to\infty} \|\hat{A}_k-A_k\|_{\bar{F}} =0$. 
\end{thm}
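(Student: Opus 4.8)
The plan is to apply the stable recovery property with the second matrix taken to be the estimator itself, so I begin with the one trivial ingredient and then isolate where the real work lies. By construction $\hat A_k$ agrees with $A_k$ at every revealed entry, so the error $D_k := \hat A_k - A_k$ satisfies $D_k\circ P_k = 0$, and $\|D_k\|_\infty\le 2L$ whenever $\|A_k\|_\infty\le L$. If we also knew that $\rank(\hat A_k)$ was bounded uniformly in $k$, the theorem would be immediate: Definition \ref{recovdef}, applied to the pair $A_k$ and $\hat A_k$, says that since the revealed-entry discrepancy $\|D_k\circ P_k\|_{\bar{F}}$ is identically $0$ (hence at most $\delta$ for every $\delta$), for each $\ve>0$ one has $\limsup_k\|\hat A_k - A_k\|_{\bar{F}}\le\ve$, i.e.\ $\|\hat A_k-A_k\|_{\bar{F}}\to0$. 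So the crux is that $\hat A_k$ need not be of low rank.

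What $\hat A_k$ does satisfy is a bound on its nuclear norm: $A_k$ is feasible for the convex program defining $\hat A_k$ (it agrees with itself at the revealed entries and has $\|A_k\|_\infty\le L$), hence $\|\hat A_k\|_*\le\|A_k\|_*\le\sqrt{\rank(A_k)}\,\|A_k\|_{F}$, and dividing by the square root of the number of entries shows that the averaged nuclear norm of $\hat A_k$ — and therefore of $D_k$ — is bounded uniformly in $k$. So $D_k$ is a sequence of matrices with uniformly bounded $\ell^\infty$ norm and uniformly bounded averaged nuclear norm, vanishing on $P_k$, and the goal is $\|D_k\|_{\bar{F}}\to0$. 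This is the version of the stable recovery property in which the hypothesis ``bounded rank'' is weakened to ``bounded averaged nuclear norm'', and it is what the theorem actually requires. I would prove it along the lines of Theorem \ref{mainthm}: supposing it fails, pass to a subsequence on which $\|D_k\|_{\bar{F}}\ge\ve_0>0$; using the $\ell^\infty$ bound and the analogue of Theorem \ref{compactthm} for bounded (rather than binary) matrices, extract from a further subsequence a joint limit, under a common relabeling of rows and columns, of $D_k$ and $P_k$ in the cut metric; note that the limit $D$ of the $D_k$ is a bounded integral operator of finite nuclear norm; and argue that $D_k\circ P_k=0$ forces $D$ to vanish on the support of $W$. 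Since $\{P_k\}$ admits stable recovery, Theorem \ref{mainthm} gives that $W\ne0$ almost everywhere, so $D=0$ almost everywhere, contradicting $\|D_k\|_{\bar{F}}\ge\ve_0$.

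I expect the main obstacle to be reconciling the two forms of control while running this limiting argument. A matrix of bounded averaged nuclear norm is close, in averaged Frobenius norm, to a bounded-rank matrix, but only if one truncates the singular value decomposition at a level that grows like the square root of the number of entries, and the truncation then has $\ell^\infty$ norm of that same order; keeping only the singular values of bounded absolute size preserves the $\ell^\infty$ bound but leaves a rank that grows with the dimension; and entrywise capping trades one of these defects for the other. The $\ell^\infty$ bound cannot simply be discarded, because it is precisely what makes the Hadamard product of the error against $P_k$ (and against $W$) tame in the cut metric — the estimate that allows one to pass $D_k\circ P_k$ to $D\cdot W$ in the limit. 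The clean resolution is to perform the reconciliation at the level of the limit object instead of discretely: the cut-distance limit $D$ is a genuine trace-class operator, so it is approximable in operator norm by honest finite-rank operators while remaining bounded, and the ``vanishing on the support of $W$'' step can be carried out there, with the relevant counting-lemma estimates depending only on the uniform nuclear-norm and $\ell^\infty$ bounds and not on the dimension. The points needing care are that nuclear norm is lower semicontinuous along cut-convergent sequences (so the limit is indeed trace class) and that this Hadamard-product estimate survives the passage to the limit; granting these, letting $\ve\downarrow0$ completes the proof.
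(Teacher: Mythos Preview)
Your diagnosis is exactly right: the whole point is that $\hat A_k$ inherits a scaled nuclear-norm bound from $A_k$ (since $A_k$ is feasible), not a rank bound, so one needs a nuclear-norm version of the stable recovery property. What you may not have noticed is that the paper has already proved precisely this generalization as Theorem~\ref{mainthm2}: its hypotheses are $\|A_k\|_*,\|B_k\|_*\le q\sqrt{m_kn_k}$ and bounded $\ell^\infty$ norms, and its conclusion is the $\ve$--$\delta$ statement of Definition~\ref{recovdef}. In fact the entire quantitative engine of the paper, Theorem~\ref{quantthm} and Lemma~\ref{mainlmm1}, is phrased in terms of nuclear norm from the start; the rank hypothesis in Definition~\ref{recovdef} is only ever used via the crude bound~\eqref{nuclearbd}. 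So the paper's proof of Theorem~\ref{convthm} is two lines: pass to a subsequence on which $P_k\to W$ with $W>0$ a.e.\ (Theorems~\ref{compactthm} and~\ref{mainthm}), then apply Theorem~\ref{mainthm2} to the pair $(A_k,\hat A_k)$ with $\delta=0$.

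Your proposed alternative --- a soft compactness argument at the graphon level --- is conceptually natural but has two steps that are not free and, once filled in, essentially reproduce the paper's technical core. First, the closing step ``$D=0$ a.e., contradicting $\|D_k\|_{\bar F}\ge\ve_0$'' assumes that cut convergence to zero under a uniform scaled-nuclear-norm bound forces $\bar F$-convergence to zero. This is true, but it is not automatic (cut convergence is much weaker than $\bar F$-convergence in general), and the way one proves it is exactly the block-approximation Lemma~\ref{mainlmm1}: a matrix with $\|X\|_*\le q\sqrt{mn}$ and $\|X\|_\infty\le1$ is $\ve$-close in $\bar F$ to a block matrix with boundedly many blocks, on which cut and $\bar F$ are comparable. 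Second, the passage ``$D_k\circ P_k=0$ forces $D$ to vanish on $\operatorname{supp} W$'' is the statement that the Hadamard product is continuous for the cut topology when one factor has bounded scaled nuclear norm; Hadamard products do \emph{not} respect cut limits in general, and making this step rigorous is exactly what Theorem~\ref{quantthm} does quantitatively. So your route would work, but the ``points needing care'' you flag are the whole difficulty, and resolving them leads back to the paper's lemmas. The paper's direct finitary argument via Theorem~\ref{quantthm} avoids limit objects entirely and gives an explicit (if weak) rate.
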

The modified Cand\`es--Recht estimator, just like the original estimator, will run into computational cost issues for very large matrices. It would be interesting to figure out if there is a faster algorithm (for example, by some kind of singular value thresholding~\cite{kmo10, chatterjee15, gd14}) that also has the above  `universal recovery' feature.

Another interesting and important problem is to develop an analog of the above theory when the set of revealed entries is sparse. As noted before, the problem is unsolvable in this setting if we demand that the recovery be stable. However, dropping the stability requirement may render it possible to recover the full matrix from a sparse set of revealed entries even in the absence of randomness. In particular, Theorem \ref{convthm} may have an extension to the sparse setting under appropriate assumptions. The methods of this paper would need to be significantly extended to make this possible.  

This concludes the statements of results. The proofs are organized as follows. The proof of Theorem \ref{mainthm} is divided between Sections~\ref{quantproof} and \ref{mainproof}. Theorem \ref{convthm} is proved in Section \ref{convproof}, and Theorem \ref{compactthm} is proved in Section~\ref{compactproof}. 



\section{Some related literature}\label{litsec}
A small number of papers in the literature investigate the problem of matrix completion when the entries are not missing uniformly at random. As mentioned earlier, this list is minuscule in comparison to the vast body of literature on matrix completion under the assumption of missing uniformly at random. The following is a  non-exhaustive list of some of the notable contributions. 

\citet{bj14} developed a general recovery method when the pattern of revealed entries is the adjacency matrix of a bipartite graph with a large spectral gap. A similar question was investigated in a slightly different setting by \citet{hss14}. Inhomogeneous --- but still independent and random --- patterns of missing entries were studied by \citet{cbsw}. 

A very interesting paper of \citet{sc10} applied rigidity theory to understand whether a partially revealed low rank matrix is completable or not. However, this paper did not give an algorithm for completion when completion is possible. A comparison of the criterion from \cite{sc10} with our Theorem \ref{mainthm} is an interesting question that merits further investigation. 

An attempt at giving a criterion for completability using algebraic geometry was made by \citet{ktt15}. This paper has a criterion for recoverability of specific entries of the matrix. But again, a recovery algorithm was not given. Algebraic criteria have also been investigated in other recent papers, such as the one by \citet{pbn16}. 

A different approach was taken by \citet{ls13}, who proposed a new way of interpreting the quality of the output of a given matrix completion algorithm  under arbitrary patterns of missing entries. 

The idea of using the EM algorithm for matrix completion under data-dependent missing patterns was recently studied by \citet{sbj18}. 

The main advantage of our results over most of the papers mentioned above is that we give a condition that is both necessary and sufficient for completability, and also demonstrate that a small modification of a popular algorithm can do the job when it is doable. The main disadvantage, on the other hand, is that our results are of an asymptotic nature. Developing non-asymptotic versions is an important goal. This is further discussed in the next section. 

\section{Open problems}\label{opensec}
The results of this paper leave a lot of questions unanswered. The following is a partial list.
\begin{enumerate}
\item The definition of `stable recovery' entails that the revealed entries are only approximately equal to the corresponding entries of the unknown matrix. What if we drop this condition and assume that the revealed entries are exactly equal to the true entries? How should the theory be modified?
\item Developing non-asymptotic versions of Theorems \ref{mainthm} and \ref{convthm} is extremely desirable. Note that it is not quite clear what should be the proper non-asymptotic statements that one can aspire to prove. A precise   formulation of the non-asymptotic problem is itself an open question. The non-asymptotic formulation is needed for dealing with sparse recovery problems, for the following reason. The theorems of this paper have meaningful implications when the fraction of revealed entries remains fixed as the size of the matrix goes to infinity. To properly understand the level of sparsity allowable for a matrix of a given size, one needs a non-asymptotic result. 
\item It is not clear if the Cand\`es--Recht algorithm indeed needs to be modified, or if the original version is good enough for Theorem \ref{convthm}. We believe that the modification is necessary, but we do not have a counterexample to show that the original algorithm will not work. 
\item As mentioned before, the Cand\`es--Recht algorithm is rather slow for very large matrices. Is there a faster algorithm that can take its place in Theorem \ref{convthm}?
\end{enumerate}

\section{Towards the proof of Theorem \ref{mainthm}}\label{quantproof}
The goal of this section is to prove a quantitative result that underlies the proof of Theorem \ref{mainthm}. We need to prove a number of lemmas before arriving at this theorem.

\begin{lmm}\label{xylmm1}
Let $X$ be an $m\times n$ matrix with $\|X\|_\infty\le 1$ and singular value decomposition 
\[
X = \sum_{i=1}^k \sigma_i u_i v_i^T.
\]
Then for each $i$,
\[
 \sigma_i\le \sqrt{mn}, \ \  \|u_i\|_\infty \le \frac{\sqrt{n}}{\sigma_i} \ \  \text{and} \  \  \|v_i\|_\infty \le \frac{\sqrt{m}}{\sigma_i}.
\]
\end{lmm}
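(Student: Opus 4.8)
The plan is to bound the singular values and singular vectors directly from the constraint $\|X\|_\infty \le 1$, using the orthonormality of the $u_i$ and $v_i$. First I would establish $\sigma_i \le \sqrt{mn}$. Since $\sigma_1 \ge \sigma_2 \ge \cdots$, it suffices to bound $\sigma_1$, and here $\sigma_1 = \|X\|_{\mathrm{op}} = \max\{|x^T X y| : \|x\|_2 \le 1,\, \|y\|_2\le 1\}$. Taking $x$ and $y$ to be the normalized all-ones-type vectors (or more simply, bounding $\|X\|_{\mathrm{op}} \le \|X\|_F$), we get $\sigma_1 \le \|X\|_F \le \sqrt{mn}\,\|X\|_\infty \le \sqrt{mn}$. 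This is the easy part.

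For the vector bounds, fix $i$ and look at a coordinate $u_i(a)$ of $u_i$ that achieves $\|u_i\|_\infty$. The idea is to extract $u_i(a)$ by pairing $X$ against $u_i$ on the left and a cleverly chosen vector on the right: since $u_i^T X = \sigma_i v_i^T$ (because the $u_j$ are orthonormal and $u_i^T u_j = \delta_{ij}$), we have $\sigma_i v_i = X^T u_i$, and similarly $\sigma_i u_i = X v_i$. From $\sigma_i u_i = X v_i$ we get, for the coordinate $a$,
\[
\sigma_i |u_i(a)| = \Bigl| \sum_{j=1}^n x_{aj} v_i(j) \Bigr| \le \|X\|_\infty \sum_{j=1}^n |v_i(j)| \le \sqrt{n}\,\|v_i\|_2 = \sqrt{n},
\]
using Cauchy--Schwarz and $\|v_i\|_2 = 1$. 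Hence $\|u_i\|_\infty \le \sqrt{n}/\sigma_i$. The bound on $\|v_i\|_\infty$ follows symmetrically from $\sigma_i v_i = X^T u_i$, giving $\sigma_i |v_i(b)| \le \|X\|_\infty \sum_{i'} |u_i(i')| \le \sqrt{m}\,\|u_i\|_2 = \sqrt{m}$, so $\|v_i\|_\infty \le \sqrt{m}/\sigma_i$.

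There is essentially no serious obstacle here; the only mild subtlety is making sure the identities $X v_i = \sigma_i u_i$ and $X^T u_i = \sigma_i v_i$ are invoked correctly from the given SVD $X = \sum_j \sigma_j u_j v_j^T$ with orthonormal families $\{u_j\}$ and $\{v_j\}$ — these are immediate once one writes $X v_i = \sum_j \sigma_j u_j (v_j^T v_i) = \sigma_i u_i$. I would present the $\sigma_i$ bound first, then the $\|u_i\|_\infty$ bound via $Xv_i = \sigma_i u_i$, and finally note the $\|v_i\|_\infty$ bound by the same argument applied to $X^T$ (whose $\ell^\infty$ norm is also $\le 1$ and whose SVD is $\sum_j \sigma_j v_j u_j^T$).
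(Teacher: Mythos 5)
Your proof is correct and follows essentially the same route as the paper: the $\ell^\infty$ bounds on $u_i$ and $v_i$ come from $\sigma_i u_i = Xv_i$, the triangle inequality with $\|X\|_\infty \le 1$, and Cauchy--Schwarz against $\|v_i\|_2 = 1$, exactly as in the paper's proof. The only (immaterial) difference is in the bound $\sigma_i \le \sqrt{mn}$: you use $\sigma_i \le \sigma_1 \le \|X\|_F \le \sqrt{mn}$, whereas the paper extracts it from the same display by observing that some coordinate of $u_i$ has magnitude at least $m^{-1/2}$; both arguments are valid.
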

\begin{proof}
Let $u_{ij}$ denote the $j^{\textup{th}}$ component of $u_i$. Since $Xv_i = \sigma_i u_i$ and $\|X\|_\infty\le 1$, we get 
\begin{align*}
\sigma_i |u_{ij}|&\le \sum_{l=1}^n |x_{jl} v_{il}|\le \sum_{l=1}^n |v_{il}| \le \biggl(n\sum_{l=1}^n v_{il}^2\biggr)^{1/2}=  \sqrt{n}. 
\end{align*}
Dividing throughout by $\sigma_i$ and maximizing over $j$, we get the required bound for $\|u_i\|_\infty$. The bound for $\|v_i\|_\infty$ is obtained similarly. For the bound on $\sigma_i$, notice that since $\sum_j u_{ij}^2=1$ there is some $j$ such that $|u_{ij}|\ge m^{-1/2}$, and use this information in the above display. 
\end{proof}
Recall that a matrix is called a {\it block matrix}  if its entries are constant in rectangular blocks --- in other words, if the matrix can be expressed as an array of constant matrices. We will say that two matrices $A$ and $B$ have a simultaneous block structure if they are both block matrices and the rows and columns defining the blocks are the same. Note that block structures may not be uniquely defined, but that will not be a problem for us. 
\begin{lmm}\label{mainlmm1}
Let $X$ and $Y$ be  $m\times n$ matrices with $\|X\|_\infty\le 1$ and $\|Y\|_\infty\le 1$. Let $q\ge 1$ be a number such that $\|X\|_*$ and $\|Y\|_*$ are bounded by $q\sqrt{mn}$. Take any $\ve\in (0,1)$. Then, there exist $m\times n$ matrices $A$ and $B$ with a simultaneous block structure with at most $(20000 q^6 \ve^{-10})^{5q^2\ve^{-2}}$ blocks, and permutations $\pi\in S_m$ and $\tau\in S_n$, such that $\|X^{\pi,\tau}-A\|_{\bar{F}}\le \ve$ and $\|Y^{\pi, \tau}-B\|_{\bar{F}}\le\ve$.  Moreover, it can be ensured that $\|A\|_\infty\le 1$ and $\|B\|_\infty\le1$.
\end{lmm}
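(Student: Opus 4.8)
The plan is to build $A$ and $B$ simultaneously by a Szemer\'edi-type regularity/weak-regularity argument, using the fact that $X$ and $Y$ have small nuclear norm (hence are "low complexity" in the cut norm sense). First I would record the structural consequence of the hypothesis: by Lemma \ref{xylmm1} applied to the SVD $X = \sum_{i=1}^{k}\sigma_i u_i v_i^T$, the entries of $X$ are a sum of rank-one pieces $\sigma_i u_i v_i^T$ each of which is a bounded perturbation of a rank-one matrix whose row and column "profiles" ($u_i$, $v_i$) are controlled. The key point is that if we discard the singular values that are smaller than some threshold $\theta\sqrt{mn}$, the total Frobenius error incurred is controlled: since $\|X\|_*\le q\sqrt{mn}$ and the discarded $\sigma_i$ satisfy $\sigma_i\le\theta\sqrt{mn}$, the sum of squares of the discarded singular values is at most $\theta\sqrt{mn}\cdot\sum_i\sigma_i\le \theta q\, mn$, so the truncation error in $\|\cdot\|_{\bar F}$ is at most $\sqrt{\theta q}$. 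Choosing $\theta\asymp \ve^2/q$ makes this $\le \ve/2$, and this leaves at most $q/\theta\asymp q^2\ve^{-2}$ significant singular vectors for $X$; do the same for $Y$, giving a collection of $O(q^2\ve^{-2})$ vectors $u_i$ (resp. $v_i$) total, each with $\|u_i\|_\infty\le \sqrt n/\sigma_i \le \sqrt{n}/(\theta\sqrt{mn}) = 1/(\theta\sqrt m)$, i.e. $\sqrt m\,u_i$ has entries bounded by $1/\theta$, and similarly for the $v_i$.

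Next I would partition the rows $\{1,\dots,m\}$ and columns $\{1,\dots,n\}$ so that on each block every significant left-singular-vector $u_i$ (scaled by $\sqrt m$) and every significant right-singular-vector $v_i$ (scaled by $\sqrt n$) is nearly constant. Concretely, discretize the range $[-1/\theta,1/\theta]$ of the scaled entries into intervals of width $\eta$; grouping rows by the vector of bucket-indices of all the significant $\sqrt m\, u_i$ gives a partition into at most $(2/(\theta\eta))^{O(q^2\ve^{-2})}$ row-classes, and likewise for columns. Then after a permutation $\pi,\tau$ sorting rows and columns into these classes (so the classes become contiguous blocks), define $A$ to be the block matrix obtained by replacing, inside each row-block $\times$ column-block, each $u_i$ by its average over that row-block and each $v_i$ by its average over that column-block, keeping the singular values, and then truncating entrywise to $[-1,1]$; define $B$ analogously from $Y$'s significant part. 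The entrywise truncation to $[-1,1]$ only helps (it is a contraction toward the interval containing all entries of $X$), so $\|X^{\pi,\tau}-A\|_{\bar F}$ is bounded by the truncation-of-small-singular-values error ($\le\ve/2$) plus the within-block rounding error; the latter is $\le\sum_{i}\sigma_i\bigl(\|u_i-\bar u_i\|_\infty\|v_i\|_2 + \|\bar u_i\|_2\|v_i-\bar v_i\|_\infty\bigr)/\sqrt{mn}\lesssim q\cdot\eta/\theta$ by the $\ell^\infty$ bounds above, and choosing $\eta\asymp \ve\theta/q\asymp \ve^3/q^2$ makes it $\le\ve/2$. This gives $\|X^{\pi,\tau}-A\|_{\bar F}\le\ve$ and $\|Y^{\pi,\tau}-B\|_{\bar F}\le\ve$, the number of blocks is $(2/(\theta\eta))^{O(q^2\ve^{-2})} = \bigl(O(q^3\ve^{-5})\bigr)^{O(q^2\ve^{-2})}$, and $\|A\|_\infty,\|B\|_\infty\le 1$ by construction.

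The main obstacle I anticipate is making the constants come out to exactly $(20000 q^6\ve^{-10})^{5q^2\ve^{-2}}$, which forces some care in the bookkeeping: one must track (i) the precise number of significant singular vectors of $X$ \emph{and} $Y$ jointly (it is $\le q^2\ve^{-2}$ up to a small constant, from $\theta = c\ve^2/q$ with the right $c$), (ii) the exponent $1/\theta\cdot 1/\eta$ inside the per-coordinate discretization, and (iii) that the partition must refine the row/column structure for \emph{both} $X$ and $Y$ simultaneously so that $A$ and $B$ share a block structure — this only squares the vector of bucket indices, not the number of blocks' exponent. A secondary subtlety is that the same permutation pair $(\pi,\tau)$ must work for $X$ and $Y$ at once; this is automatic because we build one common partition of rows and one common partition of columns from the union of all significant singular vectors of $X$ and $Y$, and then $\pi,\tau$ are chosen to list the row-classes and column-classes contiguously. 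Everything else (the Cauchy--Schwarz step $\sum|v_{il}|\le\sqrt{n\sum v_{il}^2}$, the bound on discarded singular values, the contraction property of truncation) is routine.
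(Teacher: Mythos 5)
Your proposal follows essentially the same route as the paper's proof: truncate the singular values below a threshold of order $\ve^2\sqrt{mn}/q$ (using $\|X\|_*\le q\sqrt{mn}$ to control both the truncation error and the number $O(q^2\ve^{-2})$ of surviving singular vectors), use the $\ell^\infty$ bounds on singular vectors from Lemma \ref{xylmm1} to discretize the surviving $u_i,v_i$ into finitely many profiles, group rows and columns by their joint profile across the singular vectors of both $X$ and $Y$ so that a single permutation pair yields a simultaneous block structure, and clip entries to $[-1,1]$ at the end. The only differences are immaterial (block-averaging versus rounding to a grid, and leaving the explicit constant $(20000q^6\ve^{-10})^{5q^2\ve^{-2}}$ as bookkeeping, where your parameter choices match the paper's orders).
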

\begin{proof}
Fix some $\ve >0$. Let $\delta$, $\gamma$ and $\eta$ be three other positive numbers, to be chosen later. Let 
\begin{align*}
X = \sum_{i=1}^k \sigma_i u_i v_i^T, \ \ \ Y = \sum_{i=1}^l \lambda_i w_i z_i^T
\end{align*}
be the singular value decompositions of $X$ of $Y$, with $\sigma_1\ge \cdots \ge \sigma_k>0$ and $\lambda_1\ge \cdots \ge \lambda_l>0$. Choose two numbers $k_1$ and $l_1$ such that $\sigma_{k_1}> \delta \ge \sigma_{k_1+1}$ and $\lambda_{l_1}> \delta \ge \lambda_{l_1+1}$. If $\sigma_i\le \delta$ for all  $i$, let $k_1=0$, and if $\sigma_i> \delta$ for all $i$, let $k_1=k$. Similarly, if $\lambda_i\le \delta$ for all  $i$, let $l_1=0$, and if $\lambda_i> \delta$ for all $i$, let $l_1=l$. Let 
\[
X_1 := \sum_{i=1}^{k_1} \sigma_i u_i v_i^T, \ \ \ Y_1 := \sum_{i=1}^{l_1} \lambda_i w_i z_i^T.
\]
Then by the definition of $k_1$, 
\begin{align}
\|X-X_1\|_{\bar{F}}^2 &= \frac{1}{mn} \sum_{i=k_1+1}^k \sigma_i^2 \notag \\
&\le \frac{\delta}{mn}\sum_{i=1}^k\sigma_i = \frac{\|X\|_*\delta}{mn} \le \frac{q\delta}{\sqrt{mn}}.\label{xbd1}
\end{align}
Similarly, the same bound holds for $\|Y-Y_1\|_{\bar{F}}^2$.

For $1\le i\le k$ and $1\le a\le m$, let $u_{ia}$ denote the $a^{\textup{th}}$ component of $u_i$. Define $\tilde{u}_{ia}$ to be the  integer multiple of $\gamma$ that is closest to $u_{ia}$ under the constraint that $|\tilde{u}_{ia}|\le |u_{ia}|$. Then note that $|u_{ia}-\tilde{u}_{ia}|\le \gamma$. Let $\tilde{u}_i$ be the vector whose $a^{\textup{th}}$ component is $\tilde{u}_{ia}$. Let $\tilde{w}_i$ be defined similarly. Define $\tilde{v}_i$ and $\tilde{z}_i$ the same way, but using $\eta$ instead of $\gamma$. Let
\[
\tilde{X}_1 := \sum_{i=1}^{k_1} \sigma_i \tilde{u}_i \tilde{v}_i^T, \ \ \ \tilde{Y}_1 := \sum_{i=1}^{l_1} \lambda_i \tilde{w}_i \tilde{z}_i^T.
\]
Now take any $1\le i\le k_1$. By Lemma \ref{xylmm1} and the choice of $k_1$, we have
\[
\|u_i\|_\infty < \frac{\sqrt{n}}{\delta}. 
\]
Therefore for any $1\le a\le m$, the set of possible values of $\tilde{u}_{ia}$ has size at most 
\[
\frac{2\sqrt{n}}{\delta\gamma} + 1\le \frac{4\sqrt{n}}{\delta\gamma},
\]
where the inequality was obtained under the assumption that 
\begin{align}\label{deltagamma}
\frac{2\sqrt{n}}{\delta\gamma}\ge 1.
\end{align}
We will later choose $\delta$ and $\gamma$ such that this assumption is valid. We can give similar bounds on the sizes of the sets of possible values of the components of $\tilde{v}_i$, $\tilde{w}_i$ and~$\tilde{z}_i$.

Declare that two rows $a$ and $a'$ are `equivalent' if $\tilde{u}_{ia}=\tilde{u}_{ia'}$ and $\tilde{w}_{i'a}=\tilde{w}_{i'a'}$ for all $1\le i\le k_1$ and $1\le i'\le l_1$. Similarly declare that two columns $b$ and $b'$ are equivalent if $\tilde{v}_{ib}=\tilde{v}_{ib'}$ and $\tilde{z}_{i'b}=\tilde{z}_{i'b'}$ for all $1\le i\le k_1$ and $1\le i'\le l_1$. Clearly, these define equivalence relations. By the previous paragraph, there are at most $(4\sqrt{n}/(\delta\gamma))^{k_1+l_1}$ equivalence classes of rows, and at most $(4\sqrt{m}/(\delta\eta))^{k_1+l_1}$ equivalence classes of columns. 

Let $\pi$ be a permutation of the rows that `clumps together' equivalent rows, and let $\tau$ be a permutation of the columns that clumps together equivalent columns. Then it is clear that $\tilde{X}_1^{\pi, \tau}$ and $\tilde{Y}_1^{\pi,\tau}$ are block matrices. By the previous paragraph, the number of blocks is at most $(16\sqrt{mn}/(\delta^2\gamma\eta))^{k_1+l_1}$. 

Now note that
\[
k_1\sigma_{k_1}\le \sum_{i=1}^k \sigma_i =\|X\|_* \le q\sqrt{mn}.
\]
But $\sigma_{k_1}> \delta$. Thus, 
\[
k_1 \le \frac{q\sqrt{mn}}{\delta}.
\]
Similarly, $l_1$ is also bounded by the same quantity. Thus, the number of blocks is at most 
\[
b := \biggl(\frac{16\sqrt{mn}}{\delta^2\gamma\eta}\biggr)^{2q\sqrt{mn}/\delta}.
\]
Now notice that by Lemma \ref{xylmm1} and the definition of $\tilde{X}_1$, 
\begin{align*}
\|X_1-\tilde{X}_1\|_{\bar{F}} &\le \|X_1-\tilde{X}_1\|_\infty \\
&\le \sum_{i=1}^{k_1} \sigma_i (\|u_i-\tilde{u}_i\|_\infty\|v_i\|_\infty + \|\tilde{u}_i\|_\infty \|v_i-\tilde{v}_i\|_\infty) \\
&\le q \sqrt{mn} \biggl(\frac{\sqrt{m}\gamma}{\delta} + \frac{\sqrt{n}\eta}{\delta}\biggr).
\end{align*}
By a similar argument, the same bound holds for  $\|Y_1-\tilde{Y}_1\|_{\bar{F}}$. Combining with \eqref{xbd1}, we see that  if $A:= \tilde{X}_1^{\pi, \tau}$ and $B := \tilde{Y}_1^{\pi, \tau}$, then $A$ and $B$ have a simultaneous block structure with at most  $b$ blocks, and $\|X^{\pi,\tau}-A\|_{\bar{F}}$ and $\|Y^{\pi,\tau}-B\|_{\bar{F}}$ are bounded by 
\begin{align*}
\frac{\sqrt{q\delta}}{(mn)^{1/4}} + q \sqrt{mn} \biggl(\frac{\sqrt{m}\gamma}{\delta} + \frac{\sqrt{n}\eta}{\delta}\biggr).
\end{align*}
Now take any $\alpha, \beta>0$ and define 
\[
\delta := \alpha\sqrt{mn}, \ \ \gamma := \frac{\beta}{\sqrt{m}}, \ \  \eta :=  \frac{\beta}{\sqrt{n}}.
\]
Plugging these values into the previous display gives 
\[
\sqrt{q\alpha} + \frac{2q\beta}{\alpha}. 
\]
For a given $\beta$, the above quantity is minimized by taking $\alpha = 2^{4/3} q^{1/3}\beta^{2/3}$, and the minimum value is $(2^{2/3}+2^{-1/3}) q^{2/3}\beta^{1/3}$. Choose $\beta$ to make this equal to $\ve$, which ensures that $\|X^{\pi,\tau}-A\|_{\bar{F}}$ and $\|Y^{\pi,\tau}-B\|_{\bar{F}}$ are bounded by $\ve$. With these choices of $\alpha$ and $\beta$, an easy calculation gives
\[
b = (16\alpha^{-2}\beta^{-2})^{2q/\alpha} \le (20000 q^6 \ve^{-10})^{5q^2\ve^{-2}}. 
\]
Also, it is easy to check (using $q\ge1$ and $\ve \in (0,1)$) that with these choices of $\alpha$ and $\beta$, the inequality \eqref{deltagamma} holds. Thus, the proof is complete except that we have not ensured that $\|A\|_\infty\le1$ and $\|B\|_\infty\le1$ in our construction. To force this, just take any element of either matrix; if it is bigger than $1$, replace it by $1$; if it is less than $-1$, replace it by $-1$. This retains the block structures of the matrices, and it cannot increase $|x_{\pi(i)\tau(j)}-a_{ij}|$ or $|y_{\pi(i)\tau(j)}-b_{ij}|$ for any $i,j$ since $|x_{\pi(i)\tau(j)}|\le 1$ and $|y_{\pi(i)\tau(j)}|\le 1$.
\end{proof}

\begin{lmm}\label{mainlmm2}
Let $A$ and $B$ be $m\times n$ matrices with a simultaneous block structure. Let $b$ be the number of blocks. Let $P$ and $Q$ be $m\times n$ matrices such that $P$ is binary and the entries of $Q$ are all in $[0,1]$. Then 
\begin{align*}
\|(A-B)\circ Q\|_{\bar{F}} &\le \|(A-B)\circ P\|_{\bar{F}} + \sqrt{b\|P-Q\|_\Box }\, \|A-B\|_\infty. 
\end{align*}
\end{lmm}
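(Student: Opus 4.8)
The plan is to pass from the averaged Frobenius norm to its square, where the quantity of interest becomes \emph{linear} in the entries of $Q-P$, and then to exploit the block structure of $A-B$ to rewrite that linear quantity as a weighted sum of signed rectangle sums, which is exactly what the cut norm controls.

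Write $C:=A-B$. Since $A$ and $B$ have a simultaneous block structure with $b$ blocks, $C$ is a block matrix with those same blocks $R_s\times S_t$ (each the product of a group of rows and a group of columns), and hence so is the entrywise square $C\circ C$, whose entries lie in $[0,\|C\|_\infty^2]$. The first ingredient is the elementary entrywise inequality: for $p\in\{0,1\}$ and $q\in[0,1]$,
\[
q^2-p^2 \;=\; q^2 - p \;\le\; q-p,
\]
using $p^2=p$ and $q^2\le q$. Since $c_{ij}^2\ge 0$, summing this against $c_{ij}^2$ gives
\[
\|C\circ Q\|_{\bar F}^2-\|C\circ P\|_{\bar F}^2 \;=\; \frac1{mn}\sum_{i,j}c_{ij}^2\bigl(q_{ij}^2-p_{ij}^2\bigr) \;\le\; \frac1{mn}\sum_{i,j}c_{ij}^2\bigl(q_{ij}-p_{ij}\bigr).
\]

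Next I would regroup the last sum by blocks. Letting $c_{st}$ denote the constant value of $C$ on $R_s\times S_t$, and noting that every entry lies in exactly one block, the right-hand side equals $\frac1{mn}\sum_{(s,t)}c_{st}^2\,\mathbf{1}_{R_s}^{T}(Q-P)\mathbf{1}_{S_t}$, a sum of $b$ terms. Each indicator vector has $\ell^\infty$-norm at most $1$, so the definition of the cut norm yields $|\mathbf{1}_{R_s}^{T}(Q-P)\mathbf{1}_{S_t}|\le mn\,\|P-Q\|_\Box$; combining this with $c_{st}^2\le\|C\|_\infty^2$ and summing over the $b$ blocks gives
\[
\|C\circ Q\|_{\bar F}^2 \;\le\; \|C\circ P\|_{\bar F}^2 + b\,\|P-Q\|_\Box\,\|C\|_\infty^2 .
\]
Taking square roots, using $\sqrt{x+y}\le\sqrt x+\sqrt y$ for $x,y\ge 0$, and recalling $C=A-B$, completes the proof.

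The one genuinely delicate point is the decision to square the norms and use $p^2=p$: this is what retains the signed quantity $q^2-p^2$, hence ultimately the \emph{signed} rectangle sums $\mathbf{1}_{R_s}^{T}(Q-P)\mathbf{1}_{S_t}$ that the cut norm can bound. The naive alternative, $\|C\circ Q\|_{\bar F}\le\|C\circ P\|_{\bar F}+\|C\circ(Q-P)\|_{\bar F}$, would reduce matters to estimating $\frac1{mn}\sum_{i,j}c_{ij}^2(q_{ij}-p_{ij})^2$; bounding $(q_{ij}-p_{ij})^2$ by $|q_{ij}-p_{ij}|$ destroys the sign and leaves an $\ell^1$-type sum over each block, and the $\ell^1$ norm of a matrix on a block is not controlled by its cut norm (the two can differ by a factor growing with the dimensions). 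Everything else — the block bookkeeping and the two elementary inequalities — is routine.
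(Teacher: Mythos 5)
Your proof is correct and follows essentially the same route as the paper's: square the averaged Frobenius norm, use $q_{ij}^2\le q_{ij}$ and $p_{ij}^2=p_{ij}$ to reduce to a quantity linear in $Q-P$, regroup by blocks so the cut norm controls each signed rectangle sum, and finish with $\sqrt{x+y}\le\sqrt{x}+\sqrt{y}$. The only cosmetic difference is that you fold the two elementary inequalities into the single bound $q^2-p^2\le q-p$ and write the block sums explicitly as bilinear forms in indicator vectors, whereas the paper applies the two steps separately; the substance is identical.
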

\begin{proof}
Let each block be represented by the set of pairs of indices $(i,j)$ that belong to the block. Let $\md$ be the set of all blocks. Take any block $D\in \md$. By the definition the cut norm,
\begin{align}\label{cutconseq}
\frac{1}{mn}\biggl|\sum_{(i,j)\in D}(p_{ij}-q_{ij})\biggr|\le \|P-Q\|_\Box.
\end{align}
Recall that $a_{ij}$ is the same for all $(i,j)$ in a block, and the same holds for $b_{ij}$. Let $a(D)$ and $b(D)$ denote the values of $a_{ij}$ and $b_{ij}$ in  a block $D$. Since $q_{ij}\in [0,1]$ for all $i,j$, 
\begin{align*}
\|(A-B)\circ Q\|_{\bar{F}}^2 &= \frac{1}{mn}\sum_{i,j} (a_{ij}-b_{ij})^2 q_{ij}^2 \\
&\le  \frac{1}{mn}\sum_{i,j} (a_{ij}-b_{ij})^2 q_{ij}\\
&= \sum_{D\in \md} (a(D)-b(D))^2 \biggl(\frac{1}{mn}\sum_{(i,j)\in D} q_{ij}\biggr).
\end{align*}
Therefore by \eqref{cutconseq},
\begin{align*}
\|(A-B)\circ Q\|_{\bar{F}}^2 &\le \sum_{D\in \md} (a(D)-b(D))^2 \biggl(\frac{1}{mn}\sum_{(i,j)\in D} p_{ij}+ \|P-Q\|_\Box\biggr).
\end{align*}
Since $P$ is binary, $p_{ij}=p_{ij}^2$ for all $i,j$.  Thus, we get
\begin{align*}
\|(A-B)\circ Q\|_{\bar{F}}^2 &\le \frac{1}{mn}\sum_{D\in \md}\sum_{(i,j)\in D} (a_{ij}-b_{ij})^2 p_{ij}^2\\
&\qquad+ \|P-Q\|_\Box\|A-B\|_\infty^2|\md|\\
&= \|(A-B)\circ P\|_{\bar{F}}^2 + \|P-Q\|_\Box \|A-B\|_\infty^2 b. 
\end{align*}
The proof is not completed by applying the inequality $\sqrt{x+y}\le \sqrt{x}+\sqrt{y}$ to the right side. 
\end{proof}
We are now ready to prove the main result of this section. The result roughly says the following. Let $X$ and $Y$ be  matrices with relatively small nuclear norms (of the same order as that for low rank matrices). Let $P$ be a binary matrix and $Q$ be a matrix with entries in $[0,1]$, such that $P$ is close to $Q$ in the cut norm. Then, the closeness of $X\circ P$ to $Y \circ P$ in average Frobenius norm implies the closeness of $X\circ Q$ to $Y\circ Q$ in  average Frobenius norm. 
\begin{thm}\label{quantthm}
Let $X$ and $Y$ be  $m\times n$ matrices with $\ell^\infty$ norms bounded by~ $1$. Let $q$ be a number such that the nuclear norms of $X$ and $Y$  are bounded by $q\sqrt{mn}$. Let $P$ and $Q$ be $m\times n$ matrices such that $P$ is binary and the entries of $Q$ are all in $[0,1]$. Then
\[
\|(X-Y)\circ Q\|_{\bar{F}}\le \|(X-Y)\circ P\|_{\bar{F}} + C(q)\sqrt{\frac{\log (-\log \|P-Q\|_\Box)}{-\log \|P-Q\|_\Box}},
\]
where $C(q)$ depends only on $q$.
\end{thm}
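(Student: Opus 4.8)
The plan is to convert $X$ and $Y$ into block matrices using Lemma \ref{mainlmm1}, transfer the comparison between $P$ and $Q$ through Lemma \ref{mainlmm2}, and then optimize a free accuracy parameter $\ve$. We may assume $q\ge 1$ (the hypotheses only get weaker if $q$ is enlarged, so replace $q$ by $\max(q,1)$), and we fix $\ve\in(0,1)$ to be chosen at the very end. Applying Lemma \ref{mainlmm1} to $X$ and $Y$ gives permutations $\pi\in S_m$, $\tau\in S_n$ and $m\times n$ matrices $A,B$ with a simultaneous block structure having at most $b(\ve,q):=(20000 q^6\ve^{-10})^{5q^2\ve^{-2}}$ blocks, with $\|A\|_\infty\le1$, $\|B\|_\infty\le1$, and $\|X^{\pi,\tau}-A\|_{\bar{F}}\le\ve$, $\|Y^{\pi,\tau}-B\|_{\bar{F}}\le\ve$. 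Since $\|\cdot\|_\Box$, $\|\cdot\|_{\bar{F}}$ and $\|\cdot\|_\infty$ are all invariant under a simultaneous permutation of rows and columns, and $(M\circ N)^{\pi,\tau}=M^{\pi,\tau}\circ N^{\pi,\tau}$, replacing $X,Y,P,Q$ by $X^{\pi,\tau},Y^{\pi,\tau},P^{\pi,\tau},Q^{\pi,\tau}$ changes neither side of the claimed inequality (and $P^{\pi,\tau}$ stays binary, $Q^{\pi,\tau}$ stays $[0,1]$-valued). So from now on $\|X-A\|_{\bar{F}}\le\ve$ and $\|Y-B\|_{\bar{F}}\le\ve$ with $A,B$ as above.

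Next I would run a short triangle-inequality sandwich. Because every entry of $Q$ lies in $[0,1]$, Hadamard multiplication by $Q$ can only decrease $\|\cdot\|_{\bar{F}}$; hence $\|(X-A)\circ Q\|_{\bar{F}}\le\|X-A\|_{\bar{F}}\le\ve$ and similarly for $Y-B$, so from $X-Y=(A-B)+(X-A)-(Y-B)$ we get $\|(X-Y)\circ Q\|_{\bar{F}}\le\|(A-B)\circ Q\|_{\bar{F}}+2\ve$. Now apply Lemma \ref{mainlmm2} to $A,B$ (block matrices with $b(\ve,q)$ blocks), the binary matrix $P$, and $Q$: since $\|A-B\|_\infty\le\|A\|_\infty+\|B\|_\infty\le2$, this gives $\|(A-B)\circ Q\|_{\bar{F}}\le\|(A-B)\circ P\|_{\bar{F}}+2\sqrt{b(\ve,q)\,\|P-Q\|_\Box}$. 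The same contraction argument (now for the binary, hence $[0,1]$-valued, matrix $P$) gives $\|(A-B)\circ P\|_{\bar{F}}\le\|(X-Y)\circ P\|_{\bar{F}}+2\ve$. Chaining these,
\[
\|(X-Y)\circ Q\|_{\bar{F}}\le\|(X-Y)\circ P\|_{\bar{F}}+4\ve+2\sqrt{b(\ve,q)\,\|P-Q\|_\Box}.
\]

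Finally I would choose $\ve$ to balance the two error terms, and this is where the unusual shape of the stated bound appears, forced by the roughly $\exp(Cq^2\ve^{-2}\log(1/\ve))$ growth of the block count $b(\ve,q)$. Put $t:=\|P-Q\|_\Box$ and $u:=-\log t$. The right-hand side of the theorem is not even real unless $u>1$, so it suffices to treat $t$ small (the only regime used later in the paper; for $t$ bounded away from $0$ there is nothing to prove), and then we may also assume $u$ is as large as we wish. Choose $\ve:=c\,q\,\sqrt{(\log u)/u}$ for a suitable absolute constant $c$; note $\ve\in(0,1)$ for $u$ large and $\ve^{-2}=u/(c^2q^2\log u)$. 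Then
\[
\log\!\bigl(b(\ve,q)\,t\bigr)=5q^2\ve^{-2}\log(20000 q^6\ve^{-10})-u,
\]
and since $\log(20000 q^6\ve^{-10})=(5+o(1))\log u$ as $u\to\infty$, the right-hand side equals $\bigl(25/c^2-1+o(1)\bigr)u$, which is at most $-u/2$ once $c$ is fixed large enough (say $c=8$) and $u$ is large. Hence $2\sqrt{b(\ve,q)\,t}\le2t^{1/4}$, which is exponentially small in $u$ and therefore $\le\ve$ for $t$ small; so the total error is at most $5\ve=5c\,q\sqrt{(\log u)/u}$, which is the asserted bound with $C(q)=5cq$. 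I expect the genuinely delicate part to be exactly this last optimization — pinning down the constant $c$ and the threshold on $\|P-Q\|_\Box$ while tracking the lower-order logarithmic terms — but it is a routine computation once the structure above is in place.
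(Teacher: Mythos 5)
Your proposal is correct and follows essentially the same route as the paper's own proof: block-approximate $X$ and $Y$ via Lemma \ref{mainlmm1}, sandwich with triangle inequalities using the fact that Hadamard multiplication by a $[0,1]$-valued matrix contracts $\|\cdot\|_{\bar{F}}$, transfer from $P$ to $Q$ via Lemma \ref{mainlmm2}, and then optimize $\ve$ against the doubly-exponential block count. Your final calibration of $\ve = cq\sqrt{(\log u)/u}$ is in fact carried out more explicitly than in the paper, which simply asserts that a sufficiently large $C(q)$ works; the small-$\|P-Q\|_\Box$ caveat you flag is present (silently) in the paper's argument as well and is harmless for how the theorem is used.
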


\begin{proof}
Without loss of generality, assume that $q\ge1$.   
Take any $\ve >0$. Let $A$, $B$, $\pi$ and $\tau$ be as in Lemma~\ref{mainlmm1}. Let 
\[
b := (20000 q^6 \ve^{-10})^{5q^2\ve^{-2}}
\] 
be the upper bound on the number of blocks given by Lemma \ref{mainlmm1}. 
Note that 
\begin{align*}
&\|(X-Y)\circ Q\|_{\bar{F}} = \| (X^{\pi,\tau}-Y^{\pi,\tau})\circ Q^{\pi, \tau}\|_{\bar{F}}\\
&\le \| (X^{\pi,\tau}-A)\circ Q^{\pi, \tau}\|_{\bar{F}} + \| (A-B)\circ Q^{\pi, \tau}\|_{\bar{F}} + \| (B-Y^{\pi,\tau})\circ Q^{\pi, \tau}\|_{\bar{F}}\\
&\le \|X^{\pi,\tau}-A\|_{\bar{F}} + \| (A-B)\circ Q^{\pi, \tau}\|_{\bar{F}}+ \| B-Y^{\pi,\tau}\|_{\bar{F}}\\
&\le 2\ve + \| (A-B)\circ Q^{\pi, \tau}\|_{\bar{F}}.
\end{align*}
By Lemma \ref{mainlmm2}, 
\begin{align*}
&\| (A-B)\circ Q^{\pi, \tau}\|_{\bar{F}} \\
&\le \| (A-B)\circ P^{\pi, \tau}\| _{\bar{F}}+ \sqrt{b\|P^{\pi,\tau}-Q^{\pi, \tau}\|_\Box} \, \|A-B\|_\infty\\
&\le \| (A-B)\circ P^{\pi, \tau}\| _{\bar{F}}+ 2\sqrt{b\|P-Q\|_\Box}.
\end{align*}
But 
\begin{align*}
&\| (A-B)\circ P^{\pi, \tau}\|_{\bar{F}}\\
&\le \| (A-X^{\pi,\tau})\circ P^{\pi, \tau}\|_{\bar{F}} + \| (X^{\pi,\tau}-Y^{\pi,\tau})\circ P^{\pi, \tau}\|_{\bar{F}} \\
&\qquad + \| (Y^{\pi,\tau}-B)\circ P^{\pi, \tau}\|_{\bar{F}}\\
&\le \|A-X^{\pi,\tau}\|_{\bar{F}} + \| (X-Y)\circ P\|_{\bar{F}}+ \|Y^{\pi,\tau}-B\|_{\bar{F}}\\
&\le 2\ve + \| (X-Y)\circ P\|_{\bar{F}}.
\end{align*}
Adding up, we get
\begin{align*}
&\|(X-Y)\circ Q\|_{\bar{F}} \le \| (X-Y)\circ P\|_{\bar{F}} + 4\ve + 2\sqrt{b\|P-Q\|_\Box}\\
&\le \| (X-Y)\circ P\|_{\bar{F}} + 4\ve + (C q^6 \ve^{-10})^{3q^2\ve^{-2}}\sqrt{\|P-Q\|_\Box},
\end{align*}
where $C$ is a universal constant. This bound holds for $\ve\in (0,1)$, but it also holds for $\ve \ge 1$ due to the presence of the $4\ve$ term. 
The required bound is now obtained by choosing 
\[
\ve = C(q)\sqrt{\frac{\log (-\log \|P-Q\|_\Box)}{-\log \|P-Q\|_\Box}}
\]
for some sufficiently large constant  $C(q)$ depending only on $q$.
\end{proof}

\section{Proof of Theorem \ref{mainthm}}\label{mainproof}
Let $\{P_k\}_{k\ge 1}$ be a sequence of binary matrices converging to a graphon $W$. Suppose that $W$ is nonzero everywhere. Let $m_k$ and $n_k$ be the number of rows and the number columns in $P_k$. Suppose that $m_k$ and $n_k$ tend to infinity as $k\to\infty$. We will first prove the following generalization of the `if' part of Theorem \ref{mainthm}.
\begin{thm}\label{mainthm2}
Let $P_k$ and $W$ be as above. Take any two sequences of $m_k\times n_k$ matrices $\{A_k\}_{k\ge 1}$ and $\{B_k\}_{k\ge 1}$. Suppose that there are numbers $q$ and $L$ such that $\|A_k\|_*$ and $\|B_k\|_*$ are bounded by $q\sqrt{m_kn_k}$ and $\|A_k\|_\infty$ and $\|B_k\|_\infty$ are bounded by $L$ for each $k$. Then for any $\ve>0$ there is some $\delta>0$, depending only on $\ve$, $q$ and $L$, such that if  $\limsup_{k\to\infty} \|(A_k - B_k)\circ P_k\|_{\bar{F}} \le \delta$, 
then $\limsup_{k\to\infty} \|A_k-B_k\|_{\bar{F}} \le \ve$.
\end{thm}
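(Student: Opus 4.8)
The plan is to reduce, via Theorem~\ref{quantthm}, to a statement in which the mask is the discretized limit graphon instead of $P_k$, and then to peel off that mask using the hypothesis that $W$ is nonzero almost everywhere. Write $m_k,n_k$ for the dimensions of $P_k$ and set $M_k:=A_k-B_k$, so that $\|M_k\|_\infty\le 2L$ and the goal is to control $\limsup_{k\to\infty}\|M_k\|_{\bar F}$. It is harmless to pass to a convenient relabeling: since $\{P_k\}$ converges to $W$ in the sense of Definition~\ref{limdef}, the definition of $\delta_\Box$ provides permutations $\pi_k\in S_{m_k}$ and $\tau_k\in S_{n_k}$ with $\|P_k^{\pi_k,\tau_k}-W_{m_k,n_k}\|_\Box\to 0$; replacing $A_k,B_k,P_k$ by $A_k^{\pi_k,\tau_k},B_k^{\pi_k,\tau_k},P_k^{\pi_k,\tau_k}$ leaves all hypotheses unchanged (nuclear norm, $\ell^\infty$ norm, and the averaged Frobenius norms $\|M_k\|_{\bar F}$, $\|M_k\circ P_k\|_{\bar F}$ are all invariant under simultaneous permutation of rows and columns), so we may simply assume $\epsilon_k:=\|P_k-W_{m_k,n_k}\|_\Box\to 0$.

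Applying Theorem~\ref{quantthm} with $X=A_k/L$, $Y=B_k/L$ (whose $\ell^\infty$ norms are at most $1$ and whose nuclear norms are at most $(q/L)\sqrt{m_kn_k}$), with $P=P_k$ and $Q=W_{m_k,n_k}$ (whose entries lie in $[0,1]$, being averages of $W$), and multiplying through by $L$, gives
\[
\|M_k\circ W_{m_k,n_k}\|_{\bar F}\le \|M_k\circ P_k\|_{\bar F}+L\,C(q/L)\sqrt{\frac{\log(-\log\epsilon_k)}{-\log\epsilon_k}}.
\]
Since $\epsilon_k\to 0$ the last term tends to $0$, so under the hypothesis $\limsup_k\|M_k\circ P_k\|_{\bar F}\le\delta$ we obtain $\limsup_k\|M_k\circ W_{m_k,n_k}\|_{\bar F}\le\delta$. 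This is the only step that uses the nuclear-norm (low rank) assumption.

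Now fix a threshold $c\in(0,1)$, let $R_k$ be the binary matrix with a $1$ in position $(i,j)$ exactly when the $(i,j)$ entry of $W_{m_k,n_k}$ is at least $c$, and let $R_k^c$ be its complementary binary matrix, so that $\|M_k\|_{\bar F}^2=\|M_k\circ R_k\|_{\bar F}^2+\|M_k\circ R_k^c\|_{\bar F}^2$. On the support of $R_k$ the entries of $W_{m_k,n_k}$ are $\ge c$, so $\|M_k\circ R_k\|_{\bar F}^2\le c^{-2}\|M_k\circ W_{m_k,n_k}\|_{\bar F}^2$, whence $\limsup_k\|M_k\circ R_k\|_{\bar F}^2\le\delta^2/c^2$. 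For the other term, $\|M_k\circ R_k^c\|_{\bar F}^2\le\|M_k\|_\infty^2\cdot(m_kn_k)^{-1}\#\{(i,j):(W_{m_k,n_k})_{ij}<c\}=4L^2\,\big|\{\bar W_k<c\}\big|$, where $\bar W_k$ is the step function on $[0,1]^2$ equal to $W_{m_k,n_k}$ on each grid cell. Because cell-averaging is an $L^1$ contraction and continuous functions are dense in $L^1([0,1]^2)$, one has $\bar W_k\to W$ in $L^1$ as $m_k,n_k\to\infty$, hence in measure; therefore $|\{\bar W_k<c\}|\to|\{W<c\}|$ for every $c$ with $|\{W=c\}|=0$. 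Since $W>0$ almost everywhere, $|\{W<c\}|\downarrow|\{W=0\}|=0$ as $c\downarrow0$. Given $\ve>0$, choose $c=c(\ve,W)\in(0,1)$ with $|\{W=c\}|=0$ and $|\{W<c\}|<\ve^2/(16L^2)$, and set $\delta:=\ve c/2$. Then $\limsup_k\|M_k\|_{\bar F}^2\le \delta^2/c^2+4L^2\cdot\ve^2/(16L^2)=\ve^2/4+\ve^2/4<\ve^2$, i.e.\ $\limsup_k\|A_k-B_k\|_{\bar F}\le\ve$, and this $\delta$ depends only on $\ve$ and $L$ (and the fixed graphon $W$), hence in particular only on $\ve,q,L$.

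The substantive part of the argument---passing from the mask $P_k$ to the mask $W_{m_k,n_k}$---is exactly Theorem~\ref{quantthm}, which is available to us, so here it functions as a black box, and the low rank hypothesis enters only through it. Within the present proof the only genuinely analytic point is the $L^1$-convergence $\bar W_k\to W$ along arbitrary (non-nested) grid refinements and the consequent convergence of sublevel-set measures; this is precisely what makes ``$W$ nonzero almost everywhere'' the right hypothesis, and everything else is bookkeeping with Hadamard products and the averaged Frobenius norm.
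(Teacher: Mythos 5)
Your argument is correct and follows the paper's proof in its essential structure: after permuting rows and columns so that $\|P_k-W_{m_k,n_k}\|_\Box\to0$, you invoke Theorem \ref{quantthm} to trade the mask $P_k$ for the mask $W_{m_k,n_k}$, and then exploit that $|\{W<c\}|\to0$ as $c\downarrow0$ because $W$ is nonzero almost everywhere. The only divergence is in the last step and is cosmetic: where you split off the hard indicator of $\{W_{m_k,n_k}\ge c\}$ and use $L^1$ (hence in-measure) convergence of the cell averages together with a choice of $c$ satisfying $|\{W=c\}|=0$, the paper uses the soft threshold $U^{(k)}=\max(W^{(k)},\eta)$ and almost-everywhere convergence of $W^{(k)}$ to $W$ --- both devices produce the same final bound of the form $\delta/c+2L\sqrt{|\{W<c\}|}$.
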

\begin{proof}
For simplicity of notation, let us denote the matrix $W_{m_k,n_k}$ by $W^{(k)}$. The convergence of $P_k$ to $W$ means that for each $k$, there are permutations $\pi_k\in S_{m_k}$ and $\tau_k\in S_{n_k}$ such that 
\begin{align*}
\lim_{k\to\infty} \|P_k^{\pi_k, \tau_k} -W^{(k)}\|_\Box = 0.
\end{align*}
Rearranging the rows and columns of $P_k$, $A_k$ and $B_k$, we may assume without loss of generality that $\pi_k$ and $\tau_k$ are the identity permutations, so that 
\begin{align}\label{mainlim1}
\lim_{k\to\infty} \|P_k -W^{(k)}\|_\Box = 0.
\end{align}
Let $\delta>0$ be a number such that
\begin{align}\label{mainlim2}
\limsup_{k\to\infty} \|(A_k - B_k)\circ P_k\|_{\bar{F}} \le \delta.
\end{align}
Without loss of generality, $L=1$. Then by \eqref{mainlim1}, \eqref{mainlim2} and Theorem \ref{quantthm}, 
\begin{align}\label{mainlim3}
\limsup_{k\to\infty} \|(A_k-B_k)\circ W^{(k)}\|_{\bar{F}} &\le \delta. 
\end{align}
Now take any $\eta\in (0,1)$. Define two functions $f,g:[0,1]\to[0,1]$ as
\[
f(x) := 
\begin{cases}
\eta &\text{ if } x\le \eta,\\
x &\text{ if } x>\eta,
\end{cases}
\]
and $g(x) := (f(x)-x)/\eta$.  Let $U^{(k)}$ be the matrix whose $(i,j)^{\textup{th}}$ element is $f(w^{(k)}_{ij})$ and let  $V^{(k)}$ be the matrix whose $(i,j)^{\textup{th}}$ element is $g(w^{(k)}_{ij})$. Since $f(x)\ge \eta$ for all $x$, 
\begin{align*}
\|A_k-B_k\|_{\bar{F}} &\le \frac{1}{\eta}\|(A_k-B_k)\circ U^{(k)}\|_{\bar{F}}\\
&\le \frac{1}{\eta}\|(A_k-B_k)\circ (U^{(k)}-W^{(k)})\|_{\bar{F}} + \frac{1}{\eta}\|(A_k-B_k)\circ W^{(k)}\|_{\bar{F}}\\
&= \|(A_k-B_k)\circ V^{(k)}\|_{\bar{F}} + \frac{1}{\eta}\|(A_k-B_k)\circ W^{(k)}\|_{\bar{F}}.
\end{align*}
Therefore by \eqref{mainlim3},
\begin{align}\label{mainlim4}
\limsup_{k\to\infty} \|A_k-B_k\|_{\bar{F}} &\le \limsup_{k\to\infty} \|(A_k-B_k)\circ V^{(k)}\|_{\bar{F}} + \frac{\delta}{\eta}. 
\end{align}
Since $L=1$, 
\begin{align*}
\|(A_k-B_k)\circ V^{(k)}\|_{\bar{F}}^2 &\le \frac{4}{m_kn_k}\sum_{i,j} g(w_{ij}^{(k)})^2\\
&= 4\iint g( W^{(k)}(x,y))^2 dx dy, 
\end{align*}
where $W^{(k)}$ now denotes the function which equals $w^{(k)}_{ij}$ for all $(x,y)$ in the rectangle $[\frac{i-1}{m_k}, \frac{i}{m_k}]\times [\frac{j-1}{n_k}, \frac{j}{n_k}]$. In other words, $W^{(k)}$ is obtained by averaging $W$ within each such rectangle. Since $m_k$ and $n_k$ tend to $\infty$ and $W$ is measurable, it follows by a standard result from analysis (see, for example, \cite[Proposition 9.8]{lovaszbook}) that $W^{(k)}(x,y) \to W(x,y)$ as $k\to\infty$ for almost every $(x,y)$. Since $g$ is a bounded continuous function, this shows that 
\begin{align*}
\limsup_{k\to\infty} \|(A_k-B_k)\circ V^{(k)}\|_{\bar{F}}^2 &\le 4\iint g( W(x,y))^2 dx dy, 
\end{align*}
On the other hand, $g(x)\le 1_{\{x\le \delta\}}$ for all $x$. Thus, 
\begin{align*}
\limsup_{k\to\infty} \|(A_k-B_k)\circ V^{(k)}\|_{\bar{F}}^2 &\le 4\phi(\eta), 
\end{align*}
where $\phi(\eta)$ is the Lebesgue measure of the set of all $(x,y)$ where $W(x,y)\le \eta$. Combining with \eqref{mainlim4}, we get
\begin{align*}
\limsup_{k\to\infty} \|A_k-B_k\|_{\bar{F}} &\le 2\sqrt{\phi(\eta)} + \frac{\delta}{\eta}. 
\end{align*}
Note that this holds for any $\eta\in (0,1)$. Since $W$ is nonzero almost everywhere, $\phi(\eta)\to 0$ as $\eta \to 0$. Thus, given $\ve >0$, we can first choose $\eta$ so small that $2\sqrt{\phi(\eta)} \le \ve/2$, and then choose $\delta$ so small that $\delta/\eta \le \ve /2$. If the sequences $A_k$ and $B_k$ satisfy \eqref{mainlim2} with this $\delta$, then the above display allows us to conclude that $\limsup_{k\to\infty} \|A_k-B_k\|_{\bar{F}} \le \ve$. 
\end{proof}
We are now ready to prove Theorem \ref{mainthm}.
\begin{proof}[Proof of Theorem \ref{mainthm}]
The `if' part of Theorem \ref{mainthm} follows immediately from Theorem \ref{mainthm2} and the observation, by Lemma \ref{xylmm1}, that 
\begin{align}\label{nuclearbd}
\|X\|_*\le \rank(X) \|X\|_\infty\sqrt{mn}
\end{align}
for any $m\times n$ matrix $X$.  

For the `only if' part, suppose that $W$ is zero on a set of positive Lebesgue measure. Denote this set by $S$ and let $\lambda(S)$ denote its Lebesgue measure. Take any $\ve >0$. By a standard measure-theoretic argument, there exists $T\subseteq [0,1]^2$ such that $T$ is a union of dyadic squares of equal size and $\lambda(S\Delta T) <\ve$. Let $\md$ be the set of all dyadic squares of this size in $[0,1]^2$. 

For each $k$, let $A_k$ be the  zero matrix of order $m_k\times n_k$. Let $B_k$ be the $m_k\times n_k$ matrix whose $(i,j)^{\textup{th}}$ entry is $1$ if $(\frac{i}{m_k},\frac{j}{n_k})\in T$ and $0$ otherwise. Since $T$ is a union of elements of $\md$, it is not difficult to see that $B_k$ is a block matrix with at most $|\md|$ blocks. In particular, its rank is bounded above  by $|\md|$. 

Now note that $\|A_k-B_k\|_{\bar{F}}^2$ equals the fraction of indices $(i,j)$ such that $(\frac{i}{m_k},\frac{j}{n_k})\in T$. Therefore as $k\to\infty$, $\|A_k-B_k\|_{\bar{F}}^2$ tends to $\lambda(T)$. If $\ve$ is  small enough, this ensures that
\begin{align}\label{counter1}
\lim_{k\to\infty} \|A_k-B_k\|_{\bar{F}}\ge \sqrt{\frac{\lambda(S)}{2}}>0. 
\end{align}
On the other hand, $\|(A_k-B_k)\circ P_k\|_{\bar{F}}^2$ equals the fraction of indices $(i,j)$ such that $(\frac{i}{m_k},\frac{j}{n_k})\in T$ and the $(i,j)^{\textup{th}}$ entry of $P_k$ is $1$. Let $D$ be one of the constituent dyadic cubes of $T$. Let $f_k(D)$ be the fraction of $(i,j)$ such that $(\frac{i}{m_k},\frac{j}{n_k})\in D$ and the $(i,j)^{\textup{th}}$ entry of $P_k$ is $1$. From the definition of cut norm, it follows  that 
\[
\lim_{k\to\infty} f_k(D) = \iint_D W(x,y) dxdy. 
\]
Summing over all $D$ as above, we get
\[
\lim_{k\to\infty} \|(A_k-B_k)\circ P_k\|_{\bar{F}}^2 =\iint_T W(x,y)dx dy.
\]
Since $\lambda(S\Delta T)\le \ve$, $W$ takes values in $[0,1]$,  and $W=0$ on $S$, this shows that 
\begin{align}\label{counter2}
\lim_{k\to\infty} \|(A_k-B_k)\circ P_k\|_{\bar{F}}^2 \le\ve + \iint_S W(x,y)dxdy  = \ve. 
\end{align}
Since $\ve$ is arbitrary, the combination of \eqref{counter1} and \eqref{counter2} shows that the sequence $P_k$ does not admit stable recovery of low rank matrices. This completes the proof of Theorem \ref{mainthm}. 
\end{proof}

\section{Proof of Theorem \ref{convthm}}\label{convproof}
To prove that $\|A_k-\hat{A}_k\|_{\bar{F}}\to 0$, we will show that for any subsequence, there is a further subsequence through which this convergence takes place. By Theorem \ref{compactthm}, we know that any subsequence has a further subsequence along which $P_k$ converges to a limit graphon. Moreover, it is easy to see that if a sequence of binary matrices admits stable recovery of low rank matrices, then any subsequence also does so. Therefore by Theorem \ref{mainthm}, we may assume without loss of generality that $P_k\to W$ for some $W$ that is nonzero almost everywhere. 

Also without loss of generality, suppose that $\|A_k\|_\infty\le1$ for all $k$. Let $L$ be a uniform upper bound on $\rank(A_k)$. Then by \eqref{nuclearbd},
\[
\|A_k\|_* \le L\sqrt{m_kn_k},
\]
where $m_k$ and $n_k$ are the number of rows and number of columns in $A_k$. Consequently, $\|\hat{A}_k\|_*$ is also bounded by $L\sqrt{m_kn_k}$. Moreover, by construction, $\|\hat{A}_k\|_\infty\le 1$ and $\|(A_k-\hat{A}_k)\circ P_k\|_{\bar{F}}=0$ for all $k$. Therefore by Theorem~\ref{mainthm2}, we can now conclude that $\|\hat{A}_k-A_k\|_{\bar{F}}\to 0$ as $k\to\infty$. 

\section{Proof of Theorem \ref{compactthm}}\label{compactproof}
Let $m$ and $n$ be two positive integers. Let $\cp$ be a partition of $\{1,\ldots,m\}$ and let $\cq$ be a partition of $\{1,\ldots, n\}$. The pair $(\cp,\cq)$ defines a block structure for $m\times n$ matrices in the natural way: Two pairs of indices $(i,j)$ and $(i',j')$ belong to the same block if and only if $i$ and $i'$ belong to the same member of $\cp$ and $j$ and $j'$ belong to the same member of $\cq$. 

If $A$ is an $m\times n$ matrix, let $A^{\cp,\cq}$ be the `block averaged' version of $A$, obtained by replacing the entries in each block (in the block structure defined by $(\cp,\cq)$) by the average value in that block. It is easy to see from the definition of the cut norm that 
\begin{equation}\label{boxcontract}
\|A^{\cp,\cq}\|_\Box \le \|A\|_\Box.
\end{equation}
We need the following lemma.
\begin{lmm}\label{szemlmm}
For any $m\times n$ matrix $A$ with $\|A\|_\infty\le 1$, there is a sequence of partitions $\{\cp_j\}_{j\ge 1}$ of $\{1,\ldots,m\}$ and a sequence of partitions $\{\cq_j\}_{j\ge 1}$ of $\{1,\ldots, n\}$ such that for each $j$,
\begin{enumerate}
\item $\cp_{j+1}$ is a refinement of $\cp_j$ and $\cq_{j+1}$ is a refinement of $\cq_j$, 
\item $|\cp_j|$ and $|\cq_j|$ are bounded by $(2^{j+2}j)^{j^2}$, and
\item $\|A - A^{\cp_j,\cq_j}\|_\Box\le 2j^{-1}+6j^3 2^{-j}$.
\end{enumerate}
\end{lmm}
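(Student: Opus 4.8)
The plan is to prove Lemma \ref{szemlmm} as a matrix analogue of the weak regularity lemma of Frieze and Kannan, constructed iteratively. First I would set up the iteration: starting from the trivial partition $\cp_0 = \{\{1,\dots,m\}\}$, $\cq_0 = \{\{1,\dots,n\}\}$, I build $(\cp_j,\cq_j)$ from $(\cp_{j-1},\cq_{j-1})$ by a single refinement step. The driving quantity is the ``energy'' $\|A^{\cp_j,\cq_j}\|_{\bar F}^2$, which is bounded by $1$ since $\|A\|_\infty \le 1$, and which is nondecreasing under refinement (this is a Pythagoras-type identity for orthogonal projections onto block-constant matrices). So the energy can increase substantially only a bounded number of times.

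The key step is the refinement mechanism. Given $(\cp_{j-1},\cq_{j-1})$, if $\|A - A^{\cp_{j-1},\cq_{j-1}}\|_\Box$ is already at most the target $2j^{-1} + 6j^3 2^{-j}$, I set $\cp_j = \cp_{j-1}$, $\cq_j = \cq_{j-1}$ and move on. Otherwise, by definition of the cut norm there are vectors $x \in \{-1,1\}^m$ (extreme points of the $\ell^\infty$ ball, after rounding) and $y \in \{-1,1\}^n$ with $|x^T(A - A^{\cp_{j-1},\cq_{j-1}})y|$ large relative to $mn$. I then refine $\cp_{j-1}$ using the partition of $\{1,\dots,m\}$ induced by the sign pattern of $x$ (two parts), and similarly refine $\cq_{j-1}$ using $y$; but to control the number of parts I discretize more carefully, using at each step a bounded number of new cuts so that $|\cp_j| \le 2|\cp_{j-1}|$ and $|\cq_j|\le 2|\cq_{j-1}|$, which after accounting for the fact that each of up to $j$ ``active'' steps can sub-divide gives the stated bound $(2^{j+2}j)^{j^2}$ (a generous over-count suffices — I would not optimize constants). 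The point of the refinement is that the new block-averaged matrix $A^{\cp_j,\cq_j}$ now captures the $x,y$ correlation that $A^{\cp_{j-1},\cq_{j-1}}$ missed, so the energy jumps by an amount proportional to the square of the cut-norm violation, i.e. by at least roughly $(2j^{-1})^2$ if the violation exceeds the target. Since the total energy is at most $1$, the number of steps at which a genuine refinement occurs is $O(j^2)$ up to stage $j$, which is consistent with the claimed part-count bound, and at every stage from the first non-refining step onward the cut-norm bound in item (3) holds.

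I would organize the argument as: (i) record the energy monotonicity and the $0 \le \text{energy} \le 1$ bound; (ii) state the single-step lemma — if the cut norm of $A - A^{\cp,\cq}$ exceeds $\theta$ then one can refine to $(\cp',\cq')$ with $|\cp'|\le 2|\cp|$, $|\cq'|\le 2|\cq|$ and energy increase at least $\theta^2/4$ or so, using \eqref{boxcontract} and the extremal $x,y$; (iii) run the iteration with $\theta_j = 2j^{-1} + 6j^3 2^{-j}$ as the threshold at stage $j$, and observe that since $\theta_j \to 0$ while the total energy gain is bounded, the process cannot refine at every stage — once it stops refining at some stage, items (1)–(3) hold from then on, and by re-indexing (padding with trivial refinements before that stage, which only helps items (1) and (3)) we get the full sequence. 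The main obstacle I anticipate is the bookkeeping in step (ii): getting a refinement that simultaneously (a) increases energy by a quantifiable amount, (b) grows each partition by only a constant factor, and (c) does so uniformly in $m,n$ — the tension is between using the single rank-one test $(x,y)$ (clean, doubles the partition, but the energy bound needs care because one refines rows and columns, not a symmetric single vertex set as in the graphon setting) versus the cruder ``epsilon-net of all test vectors'' approach (easier energy accounting but worse part-counts). I would go with the single-test-vector refinement and absorb the asymmetry by noting $\|A - A^{\cp',\cq'}\|_{\bar F}^2 - \|A - A^{\cp,\cq}\|_{\bar F}^2$ is bounded below by the squared correlation along $(x,y)$ divided by $mn$, which is exactly the cut-norm-squared lower bound needed.
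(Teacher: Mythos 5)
Your overall strategy --- building the partitions by a Frieze--Kannan energy-increment iteration rather than from the matrix itself --- is a genuinely different route from the paper's, and it is viable: the single-step lemma you state in (ii) is correct in the asymmetric setting (if $|x^T(A-A^{\cp,\cq})y|\ge \theta mn$ for $x\in\{-1,1\}^m$, $y\in\{-1,1\}^n$, then refining by the level sets of $x$ and $y$ and using the Pythagoras identity for block averaging gives $\|A^{\cp',\cq'}\|_{\bar F}^2-\|A^{\cp,\cq}\|_{\bar F}^2\ge\theta^2$, with $|\cp'|\le 2|\cp|$ and $|\cq'|\le 2|\cq|$), and the tension you worry about at the end is not actually a problem. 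However, step (iii) as written has a real gap. A \emph{single} refinement at stage $j$ does not guarantee item (3) at stage $j$: it only guarantees an energy gain, not that the cut norm drops below $\theta_j$. Your fallback, ``once it stops refining at some stage, items (1)--(3) hold from then on,'' is false, because the thresholds $\theta_j\to 0$, so the process never permanently stops refining; after a non-refining stage the next, smaller threshold can be violated again. The correct bookkeeping is to iterate the single-step refinement \emph{within} stage $j$ until $\|A-A^{\cp_j,\cq_j}\|_\Box\le\theta_j$, and then bound the \emph{total} number of refinement steps performed up to and including stage $j$ by $\theta_j^{-2}\le j^2/4$ (every step ever performed by then used a threshold $\ge\theta_j$, hence gained energy $\ge\theta_j^2$, and the total energy is at most $\|A\|_{\bar F}^2\le 1$). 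This gives $|\cp_j|,|\cq_j|\le 2^{j^2/4}$, comfortably inside the stated bound $(2^{j+2}j)^{j^2}$, and item (1) is automatic since each stage only refines the previous partitions. With that repair the argument is complete.

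For comparison, the paper proves the lemma \emph{spectrally} rather than combinatorially: it truncates the singular value decomposition of $A$ at the threshold $\sqrt{mn}/j$ (keeping at most $j^2$ terms since $\sum_i\sigma_i^2\le mn$), discretizes the entries of the retained singular vectors to multiples of $2^{-j}m^{-1/2}$ and $2^{-j}n^{-1/2}$, and takes $\cp_j,\cq_j$ to be the level sets of the discretized singular-vector profiles; the refinement property comes from the grids being nested and the truncation level decreasing in $j$, and the cut-norm bound comes from $\|M\|_\Box\le\|M\|_{op}/\sqrt{mn}$ applied to the truncation error plus a Frobenius bound on the discretization error. The spectral construction produces the partitions explicitly in one shot and reuses Lemma \ref{xylmm1}, which is why the paper's numerical bounds in items (2) and (3) have the particular form they do; your approach, once the iteration is fixed, gives cleaner part-count bounds but a less explicit partition. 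Either proof suffices for the compactness theorem.
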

\begin{proof}
Let 
\[
A = \sum_{i=1}^r \sigma_i u_i v_i^T
\]
be the singular value decomposition of $A$, where $\sigma_1\ge\cdots \ge \sigma_r>0$ are the nonzero singular values. Take any $j\ge 1$. Let $l$ be the largest number such that $\sigma_i> \sqrt{mn}/j$. If there is no such $l$, let $l=0$. Let 
\[
A_1 := \sum_{i=1}^l \sigma_i u_i v_i^T.
\]
For $1\le i\le l$ and $1\le a\le m$, let $u_{ia}$ denote the $a^{\textup{th}}$ component of $u_i$. Let $\tilde{u}_{ia}^{(j)}$ be the largest integer multiple of $2^{-j}m^{-1/2}$ that is $\le u_{ia}$. Let $\tilde{u}_i^{(j)}$ be the vector whose $a^{\textup{th}}$ component is $\tilde{u}_{ia}^{(j)}$. Similarly, for $1\le b\le n$, let  $\tilde{v}_{ib}^{(j)}$ be the largest integer multiple of $2^{-j}n^{-1/2}$ that is $\le v_{ib}^{(j)}$. Define
\[
\tilde{A}_1 := \sum_{i=1}^l \sigma_i \tilde{u}_i \tilde{v}_i^T.
\]
Declare that two rows $a$ and $a'$ are equivalent if $\tilde{u}_{ia}^{(j)}=\tilde{u}_{ia'}^{(j)}$ for all $1\le i\le l$. Similarly declare that two columns $b$ and $b'$ are equivalent if $\tilde{v}_{ib}^{(j)}=\tilde{v}_{ib'}^{(j)}$ for all $1\le i\le l$. Let $\cp_j$ be the set of equivalence classes of rows and $\cq_j$ be the set equivalence classes of columns.

From the above definition, it is clear that if $\tilde{u}_{ia}^{(j+1)} = \tilde{u}_{ia'}^{(j+1)}$, then $\tilde{u}_{ia}^{(j)} = \tilde{u}_{ia'}^{(j)}$. This shows that $\cp_{j+1}$ is a refinement of $\cp_j$. Similarly, $\cq_{j+1}$ is a refinement of $\cq_j$. 

Next, note that by Lemma \ref{xylmm1} and the definition of $l$, 
\[
\|u_i\|_\infty \le \frac{\sqrt{n}}{\sigma_i} \le \frac{j}{\sqrt{m}}
\]
for $1\le i\le l$. Thus, the set of possible values of $\tilde{u}_{ia}$ has size at most 
\[
\frac{2j/\sqrt{m}}{2^{-j}/\sqrt{m}} + 1 = 2^{j+1}j+1\le 2^{j+2}j.
\]
Therefore, $|\cp_j|\le (2^{j+2}j)^l$. Now,
\begin{align*}
l\sigma_l^2 \le \sum_{i=1}^r \sigma_i^2 = \|A\|_F^2 \le mn,
\end{align*}
where the last inequality holds because $\|A\|_\infty\le 1$. Since $\sigma_l\ge \sqrt{mn}/j$, this gives 
\begin{align}\label{lbd}
l\le j^2.
\end{align}
Thus, $|\cp_j|\le (2^{j+2} j)^{j^2}$. Similarly, $|\cq_j|\le (2^{j+2}j)^{j^2}$.

Now recall that the operator norm $\|M\|_{op}$ of a matrix $M$ is the maximum of $\|Mx\|$ over all vectors $x$ with $\|x\|\le 1$. The operator norm of a matrix is equal to its largest singular value. From our definition of the cut norm, it is not difficult to see that  for an $m\times n$ matrix $M$, 
\[
\|M\|_\Box \le \frac{\|M\|_{op}}{\sqrt{mn}}.
\]
Thus, 
\begin{align*}
\|A-A_1\|_\Box \le \frac{\|A-A_1\|_{op}}{\sqrt{mn}}= \frac{\sigma_{l+1}}{\sqrt{mn}}\le \frac{1}{j}.
\end{align*}
Next, by \eqref{lbd}, Lemma \ref{xylmm1} and the (easy) fact that the cut norm is bounded above by the average Frobenius norm,
\begin{align*}
&\|A_1-\tilde{A}_1\|_\Box  \le \|A_1-\tilde{A}_1\|_{\bar{F}} \\
&\le \sum_{i=1}^l \sigma_i (\|u_i-\tilde{u}_i\|_\infty \|v_i\|_\infty + \|\tilde{u}_i\|_\infty \|v_i - \tilde{v}_i\|_\infty)\\
&\le \sum_{i=1}^l \sigma_i (\|u_i-\tilde{u}_i\|_\infty \|v_i\|_\infty + (\|\tilde{u}_i - u_i\|_\infty +\|u_i\|_\infty)\|v_i - \tilde{v}_i\|_\infty)\\
&\le l\sqrt{mn}(2^{-j} m^{-1/2} j n^{-1/2}+ 2^{-j}m^{-1/2} 2^{-j}n^{-1/2} + jm^{-1/2}2^{-j}n^{-1/2})\\
&\le 3j^3 2^{-j}. 
\end{align*}
Combining, we get
\[
\|A-\tilde{A}_1\|_\Box \le j^{-1}+3j^3 2^{-j}. 
\]
Now note that $\tilde{A}_1$ is constant within the blocks defined by the pair $(\cp_j,\cq_j)$. Thus, by \eqref{boxcontract},
\begin{align*}
\|A-A^{\cp_j, \cq_j}\|_\Box &\le\|A-\tilde{A}_1\|_\Box + \|\tilde{A}_1-A^{\cp_j, \cq_j}\|_\Box\\
&\le \|A-\tilde{A}_1\|_\Box + \|\tilde{A}_1^{\cp_j,\cq_j}-A^{\cp_j, \cq_j}\|_\Box\le 2\|A-\tilde{A}_1\|_\Box.
\end{align*}
This completes the proof.
\end{proof}
We are now ready to prove Theorem \ref{compactthm}. In this proof, we will use the following scheme to define a graphon using a matrix. Suppose that $A$ is an $m\times n$ matrix. The graphon defined by $A$, which we will also denote by $A$, is the function $A:[0,1]^2\to[0,1]$ which equals $a_{ij}$ in the rectangle $(\frac{i-1}{m}, \frac{i}{m})\times (\frac{j-1}{m}, \frac{j}{m})$ for each $1\le i\le m$ and $1\le j\le n$. On the boundaries of the rectangles, $A$ can be defined arbitrarily. 

In the proof, we will need to work with cut norms of asymmetric graphons. The cut norm of an asymmetric graphon $W$ is defined as
\[
\|W\|_\Box := \sup_{a,b}\biggl|\iint a(x) b(y) W(x,y) dxdy\biggr|,
\]
where the supremum is taken over all Borel measurable $a,b:[0,1]\to[-1,1]$. If the graphon is defined by a matrix as in the previous paragraph, it is easy to see that the cut norm of the graphon equals the cut norm of the matrix. A property of the cut norm that we will use in the proof is that the cut norm of an asymmetric graphon is bounded above by its $L^1$ norm.  
\begin{proof}[Proof of Theorem \ref{compactthm}]
Let $\{A_k\}_{k\ge 1}$ be a sequence of matrices with dimensions tending to infinity. Let $m_k$ and $n_k$ be the number of rows and number of columns in $A_k$. Lemma~\ref{szemlmm} tells us that for each $k$ and $j$, we can find a partition $\cp_{k,j}$ of $\{1,\ldots,m_k\}$ and a partition $\cq_{k,j}$ of $\{1,\ldots, n_k\}$ such that
\begin{enumerate}
\item $\cp_{k,j+1}$ is a refinement of $\cp_{k,j}$ and $\cq_{k,j+1}$ is a refinement of $\cq_{k,j}$, 
\item $|\cp_{k,j}|$ and $|\cq_{k,j}|$ are bounded by $(2^{j+2}j)^{j^2}$, and 
\item $\|A_k - A_k^{\cp_{k,j},\cq_{k,j}}\|_\Box\le 2j^{-1}+6j^3 2^{-j}$.
\end{enumerate}
For simplicity, let us denote $A_k^{\cp_{k,j},\cq_{k,j}}$ by $A_{k,j}$. By permuting rows and columns of $A_k$, let us assume that the members of $\cp_{k,j}$ and $\cq_{k,j}$ are intervals, so that $A_{k,j}$ is a block matrix. As described in the paragraph preceding this proof, the matrix $A_{k,j}$ defines an asymmetric graphon which is  also denoted by $A_{k,j}$. This graphon is constant in rectangular blocks, where the number of blocks is bounded by $(2^{j+2}j)^{2j^2}$. Passing to a subsequence if necessary, we may assume that for each fixed $j$, these blocks tend to limiting blocks as $k\to\infty$, and moreover, that the value of $A_{k,j}$ within each block also tends to a limit. This limit defines an asymmetric graphon; let us call it $W_j$. Clearly, $A_{k,j}\to W_j$ in the $L^1$ metric as $k\to\infty$.

Now note that by construction, the block structure for $W_{j+1}$ is a refinement of the block structure for $W_j$. Moreover, also by construction, the value of $W_j$ in one of its blocks is the average value of $W_{j+1}$ within that block. From this, by a standard martingale argument (for example, as in the proof of \cite[Theorem 9.23]{lovaszbook}) it follows that $W_j$ converges pointwise almost everywhere to an asymmetric graphon $W$ as $j\to\infty$. In particular, $W_j\to  W$ in $L^1$. We claim that $A_k\to W$ in the cut norm as $k\to\infty$. To show this, take any $\ve >0$. Find $j$ so large that $\|W-W_j\|_{L^1}\le \ve$ and  $2j^{-1}+6j^3 2^{-j}\le \ve$. Then for any $k$,
\begin{align*}
\|W-A_k\|_\Box &\le \|W-W_j\|_\Box + \|W_j - A_{k,j}\|_\Box + \|A_{k,j}-A_k\|_\Box\\
&\le \ve + \|W_j - A_{k,j}\|_{L^1} +  2j^{-1}+6j^3 2^{-j}\\
&\le 2\ve + \|W_j - A_{k,j}\|_{L^1}.
\end{align*}
Since $A_{k,j}\to W_j$ in $L^1$ as $k\to\infty$ and $\ve$ is arbitrary, this completes the proof. 
\end{proof}

\section*{Acknowledgments}
I thank Sohom Bhattacharya, Nicholas Cook, Terry Tao, and the anonymous referees for helpful comments.

\end{document}